\numberwithin{equation}{section}
\newtheorem{theorem}{Theorem}[section]
\newtheorem{definition}{Definition}[section]
\newtheorem{lemma}{Lemma}[section]
\newtheorem{proposition}{Proposition}[section]
\newtheorem{corollary}{Corollary}[section]
\begin{document}
\title[Control of Timoshenko laminated beams]{On the controllability of laminated beams 
 with Venttsel-type boundary conditions}
	\author[Bautista]{George J. Bautista}
	\author[Capistrano--Filho]{Roberto de A. Capistrano--Filho*}
	\author[Limaco]{Juan Límaco}
	
	\address{Instituto de Matemática e Estatística, Universidade Federal Fluminense, Campus  Gragoatá, 24210-201, Niterói (RJ), Brazil.}
		\email{\url{geojbs25@gmail.com}}
		\email{\url{jlimaco@id.uff.br}}
	
	\address{Departamento de Matem\'atica, Universidade Federal de Pernambuco, S/N Cidade Universit\'aria, 50740-545, Recife (PE), Brazil}
	\email{\url{roberto.capistranofilho@ufpe.br}}

%
	\subjclass[2020]{93B05, 93B07, 35L15, 74K10}
	\keywords{Control of laminated beams, Timoshenko equation,  Semigroup theory,  Observability inequality, Venttsel conditions}
	\thanks{$^*$Corresponding author: \url{{roberto.capistranofilho@ufpe.br}}}
	
	\thanks{\textbf{Funding.} Bautista was partially supported by FAPERJ under the program PDS-2024, grant number SEI-0260003/019497/2024. Capistrano–Filho was partially supported by CAPES-COFECUB program grant number 88887.879175/2023-00, CNPq grant numbers 421573/2023-6 and 307808/2021-1, and Propesqi (UFPE). L\'imaco was partially supported by CNPq grant number 310860/2023-7 (Brazil).}
	
	\begin{abstract}		
This paper examines the boundary controllability of a Timoshenko laminated beam system subject to Venttsel-type boundary conditions. The study focuses on a novel configuration in which three controls are applied solely at the boundary of the beam. Controllability is established by deriving an appropriate observability inequality for the corresponding adjoint system, which is then employed within the framework of the duality method in the setup of the classical Hilbert uniqueness method (HUM) to achieve the control problem. The main contribution lies in the analysis of a system comprising three beams governed by dynamic Venttsel-type boundary conditions, as introduced by Venttsel in \cite{Venttsel}.	
\end{abstract}
	
	\date{\today}
	\maketitle
	
	
	\thispagestyle{empty}
	
	\section{Introduction}

\subsection{Model under consideration}  This paper focuses on the analysis of the solution behavior for a Timoshenko-type laminated beam model governed by the following system
\begin{equation}%
\begin{cases}
\rho w_{tt}+G(\psi-w_{x})_{x}=0,&\mbox{ in } \left(  0,L\right)
\times\mathbb{R}^{+},\\
I_{\rho}\left(  3S_{tt}-\psi_{tt}\right)  -D(3S_{xx}-\psi_{xx})-G(\psi
-w_{x})=0, &\mbox{ in }\quad\left(  0,L\right)  \times\mathbb{R}^{+},\\
3I_{\rho}S_{tt}-3DS_{xx}+3G(\psi-w_{x})+4\delta_{0}S+4\gamma_{0}S_{t}%
=0,&\mbox{ in }\quad\left(  0,L\right)  \times\mathbb{R}^{+},
\label{eq1}
\end{cases}
\end{equation}
subject to homogeneous dynamic Venttsel type boundary conditions and prescribed initial data. Here, $L$ denotes the length of the beam, and the subscripts $x$ and $t$ indicate partial derivatives with respect to the spatial and temporal variables, respectively. In equation \eqref{eq1}, $w(x,t)$ denotes the transverse displacement, and $\psi(x,t)$ represents the angle of rotation. The function $S(x,t)$ corresponds to the relative slip along the adhesive interface at position $x$ and time $t$. The parameters $\rho$, $G$, $I_{\rho}$, $D$, $\delta_0$, and $\gamma_0$ are all positive constants, representing the density, shear stiffness, mass moment of inertia, flexural rigidity, adhesive stiffness, and adhesive structural damping coefficient, respectively.

If we change the variables $s=-3S,$\ \ $\xi=3S-\psi,$\ \ \ $\rho_{1}=\rho
,$\ \ \ $\rho_{2}=I_{\rho},$\ \ \ $k=G,$\ \ \ $b=D,$\ \ \ $3\gamma=4\delta
_{0},$\ \ \ $3\beta=4\gamma_{0},$ then the system (\ref{eq1}) becomes
\begin{equation}%
\begin{cases}
\rho_{1}w_{tt}-k\left(  w_{x}+\xi+s\right)  _{x}=0, & \text{ \ in\ }%
(0,L)\times\mathbb{R}^{+}\text{,}\\
\rho_{2}\xi_{tt}-b\xi_{xx}+k\left(  w_{x}+\xi+s\right)  =0, &\text{
\ in\ }(0,L)\times\mathbb{R}^{+}\text{,}\\
\rho_{2}s_{tt}-bs_{xx}+3k\left(  w_{x}+\xi+s\right)  +\gamma s+\beta
s_{t}=0, &\text{ \ in\ }(0,L)\times\mathbb{R}^{+}\text{.}%
\label{eq2}
\end{cases}
\end{equation}

Laminated beam models \eqref{eq2} have attracted significant interest due to the growing use of laminated materials in engineering applications, particularly in the automotive, aerospace, and construction industries, where their high strength, durability, and flexibility are highly valued (see, e.g., Mahajan \cite{Mahajan} and Reddy \cite{Reddy}). The model considered here, originally developed by Hansen and Spies \cite{Hansen2} from Timoshenko’s classical beam theory \cite{Timoshenko}, describes a two-layer beam joined by a thin adhesive layer of negligible mass and thickness. This layer contributes a restoring force proportional to the interfacial slip. While the first two equations in \eqref{eq2} stem from Timoshenko theory, the third captures the interlayer interaction and incorporates structural damping through an internal friction term.

So, in context, we are interested in the control properties of the Laminated beam models \eqref{eq2} subject to a class of homogeneous dynamic Venttsel\footnote{The name of Alexander Ventcel is often spelled in various ways, such as Wentzell. We refer to the work \cite{Venttsel-a}, where this type of boundary condition was first introduced.} type boundary conditions and prescribed initial data, namely
\begin{equation}\label{2bbm}
\left\{\begin{array}{ll} 
\rho_{1}w_{tt}-k\left(  w_{x}+\xi+s\right)  _{x}   =0, &x\in(0,L),\,\,\, 
t>0\\   
\rho_{2}\xi_{tt}-b\xi_{xx}+k\left(  w_{x}+\xi+s\right)   
=0,  &x\in(0,L),\,\,\, 
t>0\\
\rho_{2}s_{tt}-bs_{xx}+3k\left(  w_{x}+\xi+s\right)  + \gamma s
=0, & x\in(0,L),\,\,\, t>0\\
w(0, t)=\xi(0, t)= s(0, t) =0, & t>0\\
w_{tt}(L, t)+w_x(L,t)+\xi(L,t)+s(L, t)=u_1(t), &
t>0\\
\xi_{tt}(L, t)+\xi_{x}(L, t)= u_2(t), &
t>0\\
s_{tt}(L, t) + s_x(L, t) =u_3(t), &
t>0\\
\left(  w, \xi, s \right)  \left( x,0 \right)  =\left(
w_{0}, \xi_{0}, s_{0}\right) ( x ), & x\in(0,L)\\
\left( w_{t}, \xi_{t}, s_{t}\right)  \left(  x,0\right)  =\left( w_{1}, \xi_{1}, s_{1}\right)  \left(  x\right), & x\in(0,L).\end{array}\right.\end{equation}

To be precise, this work aims to determine whether suitable boundary controls $u_1(t)$, $u_2(t)$, and $u_3(t)$ applied at the boundary of the beam can steer the system’s solutions to exhibit prescribed behaviors. This inquiry touches on a central problem in control theory:
\vglue 0.2 cm

\noindent\textbf{Null controllability problem:}  \textit{Given $T > 0$ and initial states $\left( w_{0}, w_{1}, \xi_{0}, \xi_{1}, s_{0}, s_{1}\right)$ within a specified function space, can we determine a   control inputs $u_1, u_2$ and $u_3$ such that the system \eqref{2bbm} admits a solution $\left(w, \xi, s\right)$ satisfying the initial conditions and the null control properties
$$\left( w, \xi, s \right) \left( x, T \right) = \left( 0, 0, 0 \right) \quad \text{and}\quad \left( w_{t}, \xi_{t}, s_{t} \right) \left( x, T \right) = \left( 0, 0, 0\right), \quad\text{for $x\in(0,L)$?}$$
}

\subsection{Background}Venttsel-type boundary conditions emerged independently in the works of Venttsel \cite{Venttsel-a} and Feller \cite{Feller}, originally in the study of generalized second-order differential operators and their connections to Markov processes. A broader framework was later developed in \cite{Venttsel}. In recent years, there has been growing interest in evolution equations incorporating Venttsel and dynamic boundary conditions, driven by their relevance in several applied contexts.

It is essential to note that these types of conditions occur in various mathematical models, particularly in fields such as population dynamics, heat transfer, continuum thermodynamics, and planar motion. For additional context and examples, the reader may consult references \cite{Farkas,Langer,Morgul-1,Morgul-2,Nikita,Tom} and the literature cited therein. A comprehensive treatment of the physical background and derivation of these boundary conditions can be found in the seminal work \cite{Goldstein}.

These boundary conditions have been the focus of significant research in the context of control and stabilization problems for partial differential equations. Several works address the stabilization of the wave equation under such boundary conditions. For instance, the exponential decay of energy has been established in domains of $\mathbb{R}^n$ and on compact Riemannian manifolds, as discussed in \cite{Buffe,Cavalcanti,Cavalcanti3,Hemina,Marta}, among others.

In the context of parabolic equations, the boundary null controllability of the heat equation with Wentzell-type boundary conditions has been studied in \cite{Maniar}. These conditions are particularly relevant in modeling physical systems where the boundary exhibits its dynamics, such as thermoelastic or fluid-structure interactions.

Boundary stabilization under dynamical boundary conditions has also been considered for various models.  In \cite{Buffe,Cavalcanti,Cavalcanti3}, the authors analyze the Cauchy--Ventcel problem, establishing uniform energy decay rates.  In \cite{Cavalcanti}, stabilization and observability estimates, along with asymptotic energy decay rates, are established for the wave equation with nonlinear interior damping and Wentzell-type boundary conditions. Also, we mention that in \cite{Cavalcanti3}, the authors investigate a wave equation subject to nonlinear interior damping and Wentzell-type boundary conditions of the form $\partial_\nu u + u = \Delta_T u$. They establish stabilization and observability estimates, as well as asymptotic energy decay rates for the system. A notable aspect of this work is that the damping mechanism is localized in a subset of the interior and does not extend to a full collar of the boundary, leaving a portion governed exclusively by the high-order Wentzell condition. This partial dissipation poses additional challenges in analyzing the long-time behavior of the solution.
More recently, in \cite{Buffe}, he studies the damped wave equation on either an open subset of $\mathbb{R}^n$ or a smooth Riemannian manifold with boundary, under Venttsel-type boundary conditions. By employing refined Carleman estimates near the boundary, they derive resolvent estimates along the imaginary axis, which yield logarithmic decay rates for the associated energy, uniform with respect to the thickness parameter $\delta$. 

Further investigations include the analysis of coupled systems with dynamical boundary interactions. For example, in \cite{CHITOUR}, the authors examine a one-dimensional Timoshenko beam system. In the first part of their work, they consider a single boundary fractional damping and prove strong stability without uniformity. They then establish polynomial energy decay rates depending on the system coefficients and the order of the damping. In the second part, the study focuses on exact boundary controllability under mixed Dirichlet–Neumann conditions. Using non-harmonic Fourier analysis and the Hilbert Uniqueness Method (HUM), they derive controllability results in appropriate functional spaces.

Another notable contribution is found in \cite{Victor}, where the authors investigate a Cauchy--Ventcel problem in an inhomogeneous medium, subject to nonlinear boundary damping distributed in a neighborhood $\omega$ of the boundary, satisfying the Geometric Control Condition. They obtain results on energy decay and stabilization in this nonlinear and spatially heterogeneous setting. 

Lastly, several works address the exponential stabilization of laminated beams with structural damping and boundary feedback controls. These studies consider mechanical models of layered composite materials and utilize boundary actuators to enforce rapid decay of vibrations, further illustrating the relevance of dynamical boundary conditions in practical control scenarios (see, e.g., \cite{Bau,FSou,LoT1,Wang} and the reference therein).

We emphasize that this represents only a small portion of the existing literature, and we encourage interested readers to consult the references cited in the works mentioned above.

\subsection{Main result and outline of the paper}  With this overview of the literature, we are in a position to present our main result. Before presenting the main result, let us introduce some notations. Consider the operator $\mathcal{A}$ associated to \eqref{2bbm} defined by
\begin{equation}\label{opae2-int}
\mathcal{A}U:=
\begin{bmatrix}
\Psi_1\\
\dfrac{1}{\rho_1}\left[k(w_x+\xi+s)_x\right]\\
\Psi_2\\
\dfrac{1}{\rho_2}\left[b \xi_{xx}-k(w_x+\xi+s)\right]\\
\Psi_3\\
\dfrac{1}{\rho_2}\left[b s_{xx} -3k(w_x+\xi+s)-\gamma s\right]\\
-(w_x(L)+\xi(L)+s(L))\\
-\xi_{x}(L)\\
-s_{x}(L)
\end{bmatrix}.
\end{equation}
Here, the new variables are
\[
\Psi_1=w_{t},\ \ \ \Psi_2=\xi_{t},\ \ \ \Psi_3=s_{t},\ \ \ \Psi_4(\cdot)=\Psi_1(L, \cdot), \ \ \ \Psi_5(\cdot)=\Psi_2(L, \cdot), \ \ \ \Psi_6(\cdot)=\Psi_3(L, \cdot),
\]  
and define the vector functions
\[
U=(w,\Psi_1, \xi,\Psi_2,s,\Psi_3, \Psi_4, \Psi_5, \Psi_6)^{\top}\text{ \ and \ }U_{0}=\left(  w_{0}%
,w_{1},\xi_{0},\xi_{1},s_{0},s_{1}, \Psi_4(0), \Psi_5(0), \Psi_6(0)\right)  ^{\top}.%
\]

Now, let us define the phase space $\mathcal{H}$, we consider the Lebesgue space $L^2 (0, L)$ with the usual inner product $\left\langle \cdot, \cdot \right\rangle$  and norm $\Vert \cdot \Vert$. We will also use the following Hilbert space:
$$
H_*^1(0, L)=\left\{w \in H^1(0, L) ; w(0)=0\right\},
$$
and define the phase space as
$$
\mathcal{H}=\left[H_*^1(0, L) \times L^2(0, L)\right]^3 \times\mathbb{R}^3.
$$
Endowed this space with the inner product
$$
\begin{aligned}
\left\langle U, U^{\sharp}\right\rangle_\mathcal{H}= & 3 \rho_1\left\langle \Psi_1, \Psi_1^{\sharp}\right\rangle+3 \rho_2\left\langle \Psi_2, \Psi_2^{\sharp}\right\rangle+\rho_2\left\langle \Psi_3, \Psi_3^{\sharp}\right\rangle+3 b\left\langle\xi_x, \xi_x^{\sharp}\right\rangle \\
& +b\left\langle s_x, s_x^{\sharp}\right\rangle + \gamma \left\langle s, s^{\sharp}\right\rangle +3 k\left\langle w_x+\xi+s, w_x^{\sharp}+\xi^{\sharp}+s^{\sharp}\right\rangle \\
&+3k\left(\Psi_4,\Psi_4^{\sharp}   \right)_{\mathbb{R}}+3b\left(\Psi_5,\Psi_5^{\sharp}   \right)_{\mathbb{R}}+b\left(\Psi_6,\Psi_6^{\sharp}   \right)_{\mathbb{R}}
\end{aligned}
$$
and the following norm
\begin{align*}
\|U\|^2_\mathcal{H}=&3 \rho_1\|\Psi_1\|^2+3 \rho_2\|\Psi_2\|^2+\rho_2\|\Psi_3\|^2+3 b\left\|\xi_x\right\|^2+b\left\|s_x\right\|^2 + \gamma \left\|s\right\|^2 +3 k\left\|w_x+\xi+s\right\|^2\\
&+3k \left| \Psi_4 \right|^2_{\mathbb{R}}+3b\left| \Psi_5 \right|^2_{\mathbb{R}}+b\left| \Psi_6 \right|^2_{\mathbb{R}},
\end{align*}
for $U=(w,\Psi_1, \xi,\Psi_2,s,\Psi_3, \Psi_4, \Psi_5, \Psi_6)^{\top}, \, \, U^{\sharp}=(w^{\sharp},\Psi_1^{\sharp}, \xi^{\sharp},\Psi_2^{\sharp}, s^{\sharp}, \Psi_3^{\sharp}, \Psi_4^{\sharp}, \Psi_5^{\sharp}, \Psi_6^{\sharp})^{\top} \in \mathcal{H}$. So, the operator $\mathcal{A}: D\left(\mathcal{A}\right) \subset \mathcal{H} \rightarrow \mathcal{H}$ defined by \eqref{opae2-int} has domain given by 
\begin{equation*}
D(\mathcal{A})=
\left\lbrace
	\begin{aligned}
		U=&(w,\Psi_1, \xi,\Psi_2,s,\Psi_3, \Psi_4, \Psi_5, \Psi_6)^{\top}\in \mathcal{H}; \,  (\omega, \xi, s)\in \left[ H^2(0,L)\cap H_*^1(0,L) \right]^3,  \\
	& (\Psi_1, \Psi_2, \Psi_3)\in \left[  H_*^1(0,L) \right]^3, \Psi_4=\Psi_1(L), \ \Psi_5=\Psi_2(L),  \ \Psi_6=\Psi_3(L),\\
    & (w_x(L), \xi_x(L), s_x(L)) \in \mathbb{R}^3.
	\end{aligned}
\right\rbrace
\end{equation*}

With this in hand, the following result ensures the controllability of the system \eqref{2bbm} considering controls acting in the Venttsel-type boundary conditions.
\begin{theorem}\label{controlteorfinalnodemos}Let a time $ T>0 $ and  given  $$\left(  w_{0}%
,w_{1},\xi_{0},\xi_{1},s_{0},s_{1}, ,w_{1}(L), \xi_{1}(L), s_{1}(L)\right)  ^{\top}\in D\left(\mathcal{A}\right),  $$ one can always find a control inputs  $ u_i \in  L^2(0,T), \ \ i=1,2,3  $,  such that \eqref{2bbm} admits a unique solution $$ U=(w,w_t, \xi,\xi_t,s, s_t, w_t(L), \xi_t(L), s_t(L))^{\top} \in
C ([0,T]; D\left(\mathcal{A}\right))$$satisfying 
$$ (w(T), w_t(T),  \xi(T), \xi_t(T),  s(T),  s_t(T), w_t(L, T), \xi_t(L, T), s_t(L, T))=(0, 0, 0, 0, 0, 0, 0, 0, 0)$$
Moreover, there exists a constant $ C>0 $ such that
\begin{align*}
\Vert (u_1, u_2, u_3) \Vert_ {[L^2(0,T)]^3 }\leq C     \Vert \left(  w_{0}%
,w_{1},\xi_{0},\xi_{1},s_{0},s_{1}, ,w_{1}(L), \xi_{1}(L), s_{1}(L)\right)  ^{\top} \Vert_{\mathcal{H}}. 
\end{align*}
\end{theorem}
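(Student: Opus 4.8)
The plan is to establish controllability via the Hilbert Uniqueness Method (HUM), which reduces the problem to proving an observability inequality for the adjoint system. Since the statement is phrased in $D(\mathcal{A})$ with controls in $[L^2(0,T)]^3$, I would first verify that $\mathcal{A}$ generates a $C_0$-semigroup (or that $\mathcal{A}\pm\lambda I$ does), so that the uncontrolled homogeneous system is well-posed and the adjoint problem is meaningful. The adjoint operator $\mathcal{A}^*$ should differ from $\mathcal{A}$ essentially by the sign of the boundary terms, and because the damping term $\beta s_t$ has already been dropped in \eqref{2bbm}, the relevant adjoint dynamics are conservative, with the natural energy coming from the $\mathcal{H}$-norm.

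Concretely, I would let $\Phi=(\varphi,\varphi_t,\theta,\theta_t,z,z_t,\ldots)^{\top}$ solve the backward adjoint system and define the energy $E(t)=\tfrac12\|\Phi(t)\|_{\mathcal{H}}^2$. The first key step is an energy identity: differentiating $E(t)$ and integrating by parts in $x$, all interior terms should cancel due to the skew-adjoint structure, leaving only boundary contributions at $x=L$ (the contributions at $x=0$ vanish by the Dirichlet conditions in $H_*^1$). This yields conservation of energy, so $E(t)=E(0)$ for all $t$. The second and central step is to produce the observability inequality
\begin{equation*}
\|\Phi(0)\|_{\mathcal{H}}^2 \leq C\int_0^T\Bigl(|\varphi_t(L,t)|^2+|\theta_t(L,t)|^2+|z_t(L,t)|^2\Bigr)\,dt,
\end{equation*}
whose right-hand side is exactly the trace that the HUM functional pairs against the controls $(u_1,u_2,u_3)$. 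To get this I would use the multiplier method: multiply each equation by a suitable multiplier (typically $x\varphi_x$, $x\theta_x$, $xz_x$ together with the lower-order multipliers $\varphi$, $\theta$, $z$) and integrate over $(0,L)\times(0,T)$. Summing the resulting identities and using the energy conservation should dominate $T\,E(0)$ by the boundary traces plus lower-order remainder terms.

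The main obstacle I expect is twofold. First, the coupling term $k(w_x+\xi+s)$ appears in all three equations with different weights (note the factor $3$ in the third equation and the extra $\gamma s$ potential), so the multiplier identities will generate many cross terms; choosing the multipliers and the coefficients in front of each equation so that these cross terms organize into a controllable sign-definite combination is delicate. Second, and more seriously, the Venttsel boundary conditions are genuinely dynamic: the equations $\varphi_{tt}(L)+\varphi_x(L)+\cdots=0$ etc. mean the boundary carries its own ODE dynamics, so the boundary traces $\varphi_t(L,t)$ are themselves state variables (the components $\Psi_4,\Psi_5,\Psi_6$) rather than mere restrictions of interior traces. The multiplier computations must therefore correctly account for the boundary inertia, and the energy $E$ already includes the boundary kinetic terms $3k|\Psi_4|^2$, etc. Reconciling the boundary time-derivatives produced by the multipliers with these dynamic conditions is where the argument will be most technical.

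Finally, once the observability inequality is in hand, I would close the argument by the standard duality: define the bounded linear functional on $\mathcal{H}$ associated to the adjoint solution, invoke coercivity (which is precisely the observability inequality) together with the Lax--Milgram theorem to solve the HUM variational problem, and read off the controls $(u_1,u_2,u_3)$ from the boundary traces of the minimizer. The control estimate $\|(u_1,u_2,u_3)\|_{[L^2(0,T)]^3}\leq C\|(w_0,\ldots)\|_{\mathcal{H}}$ then follows from the continuity of this construction, and the regularity $U\in C([0,T];D(\mathcal{A}))$ follows from semigroup theory applied to the controlled system with the constructed inputs.
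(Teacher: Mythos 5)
Your overall architecture (semigroup well-posedness, multiplier estimates, observability, HUM duality) matches the paper's, but there is a concrete mismatch at the central step: you claim the observability inequality in terms of the \emph{velocity} traces,
\begin{equation*}
\|\Phi(0)\|_{\mathcal{H}}^2 \leq C\int_0^T\bigl(|\varphi_t(L,t)|^2+|\theta_t(L,t)|^2+|z_t(L,t)|^2\bigr)\,dt,
\end{equation*}
is ``exactly the trace that the HUM functional pairs against the controls.'' It is not. Carrying out the duality computation with the dynamic Venttsel conditions (the control enters through $w_{tt}(L,t)+w_x(L,t)+\xi(L,t)+s(L,t)=u_1(t)$, and after substituting this into the boundary term produced by the spatial integration by parts and then integrating $w_{tt}(L)\chi(L)$ by parts in time), one finds that $u_1,u_2,u_3$ pair against the \emph{position} traces $\chi(L,t),\eta(L,t),\Theta(L,t)$ of the adjoint state, as in the paper's identity \eqref{equiv}. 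Consequently the HUM functional is coercive only if one proves
\begin{equation*}
\|W(0)\|_{\mathcal{H}}^2 \leq C\int_0^T\bigl(|\chi(L,t)|^2+|\eta(L,t)|^2+|\Theta(L,t)|^2\bigr)\,dt,
\end{equation*}
which does not follow from the velocity-trace estimate by any trivial manipulation. The paper bridges this gap with a specific device you do not have: since $0\in\rho(\mathcal{A})$ (Lemma \ref{0inresolv}), one applies the velocity-trace observability to the auxiliary solution $\tilde U$ with initial datum $\tilde U_0=\mathcal{A}^{-1}U_0$; then $\tilde U_t=U$, so the velocity traces of $\tilde U$ are precisely the position traces of $U$, yielding \eqref{thirdesiobserder3}. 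Without this (or an equivalent) step your argument does not close.

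A second, smaller gap: you correctly anticipate that the multiplier identities leave ``lower-order remainder terms,'' but you give no mechanism to remove them. In the paper the multipliers $3xw_x$, $3x\xi_x$, $xs_x$ only give the weak estimate \eqref{primerdesiobserder}, with the remainder $C\|(w,\xi,s)\|^2_{L^\infty(0,T;[H^s(0,L)]^3)}$, and upgrading this to the genuine observability \eqref{seconddesiobserder2} requires a compactness--uniqueness contradiction argument: Simon's compactness theorem to extract a strongly convergent subsequence, passage to the limit to obtain a solution with vanishing boundary velocity traces, extension by zero plus Holmgren's uniqueness theorem to conclude that limit is stationary, and finally the invertibility of $\mathcal{A}$ to force it to vanish, contradicting the normalization. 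This unique-continuation step is not optional bookkeeping; it is where the qualitative rigidity of the coupled system enters, and your sketch should at least name it.
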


To establish Theorem \ref{controlteorfinalnodemos}, we employ the observation–control duality framework developed by Dolecki and Russell \cite{DoRu}, within the setting introduced by Lions \cite{Lions1988SIAM}. Specifically, we rely on the Hilbert Uniqueness Method (HUM), which provides a fundamental equivalence between controllability and observability. A key contribution of our work lies in addressing a Timoshenko-type system involving three coupled equations. This significantly increases the complexity of the analysis, as the model consists of three beams subject to dynamical boundary conditions. These boundary conditions introduce notable difficulties in establishing the observability inequality. To overcome this, we derive two intermediate inequalities—see \eqref{primerdesiobserder} and \eqref{seconddesiobserder2} in Section \ref{sec:3}—which play a crucial role in handling the full system.

\vspace{0.2cm}

We conclude the introduction with an outline of the paper. In Section \ref{sec:2}, we prove the well-posedness of system \eqref{2bbm} using semigroup theory. Section \ref{sec:3} is devoted to the proof of our main result. We begin by establishing an observability inequality for the adjoint system associated with \eqref{2bbm}, which leads to the proof of Theorem \ref{controlteorfinalnodemos}. Finally, in Section \ref{sec:4}, we present concluding remarks and discuss directions for future research.

\section{Well-posedness theory}\label{sec:2}

This section is dedicated to establishing the well-posedness of system \eqref{2bbm}. We begin by analyzing the homogeneous case and then extend the results to the nonhomogeneous system. These foundational results are essential for addressing the controllability problem outlined in the introduction.

\subsection{The homogeneous system}

Let us first consider the following homogeneous system
\begin{equation}\label{2bbm-hom}
\left\{\begin{array}{ll} \rho_{1}w_{tt}-k\left(  w_{x}+\xi+s\right)_{x}   =0, &x\in(0,L),\,\,\, 
t>0,\\   
\rho_{2}\xi_{tt}-b\xi_{xx}+k\left(  w_{x}+\xi+s\right)   
=0,  &x\in(0,L),\,\,\, 
t>0,\\
\rho_{2}s_{tt}-bs_{xx}+3k\left(  w_{x}+\xi+s\right) + 
\gamma s=0, & x\in(0,L),\,\,\, t>0,\\
w(0, t)=\xi(0, t)= s(0, t) =0, & t>0,\\
w_{tt}(L, t)+w_x(L,t)+\xi(L,t)+s(L, t)=0, &
t>0,\\
\xi_{tt}(L, t)+\xi_{x}(L, t)= 0, &
t>0,\\
s_{tt}(L, t) + s_x(L, t) =0, &
t>0,\\
\left(  w, \xi, s \right)  \left( x,0 \right)  =\left(
w_{0}, \xi_{0}, s_{0}\right) ( x ), & x\in(0,L),\\
\left( w_{t}, \xi_{t}, s_{t}\right)  \left(  x,0\right)  =\left( w_{1}, \xi_{1}, s_{1}\right)  \left(  x\right), & x\in(0,L).\end{array}\right.
\end{equation}
Remember that
\[
\Psi_1=w_{t},\ \ \ \Psi_2=\xi_{t},\ \ \ \Psi_3=s_{t},\ \ \ \Psi_4(\cdot)=\Psi_1(L, \cdot), \ \ \ \Psi_5(\cdot)=\Psi_2(L, \cdot), \ \ \ \Psi_6(\cdot)=\Psi_3(L, \cdot),
\]  
and
\[
U=(w,\Psi_1, \xi,\Psi_2,s,\Psi_3, \Psi_4, \Psi_5, \Psi_6)^{\top}\text{ \ and \ }U_{0}=\left(  w_{0}%
,w_{1},\xi_{0},\xi_{1},s_{0},s_{1}, \Psi_4(0), \Psi_5(0), \Psi_6(0)\right)  ^{\top}.%
\]
With  these notations in hand, the system \eqref{2bbm-hom} can be written as an abstract evolution equation
\begin{equation}\label{abs}
\left\{
\begin{array}
[c]{l}
U_t = \mathcal{A} U,\\
U(0) = U_0,
\end{array}\right.
\end{equation}where the operator $\mathcal{A}$ is defined by


\begin{equation}\label{opae2}
\mathcal{A}U:=
\begin{bmatrix}
\Psi_1\\
\dfrac{1}{\rho_1}\left[k(w_x+\xi+s)_x\right]\\
\Psi_2\\
\dfrac{1}{\rho_2}\left[b \xi_{xx}-k(w_x+\xi+s)\right]\\
\Psi_3\\
\dfrac{1}{\rho_2}\left[b s_{xx} -3k(w_x+\xi+s)-\gamma s\right]\\
-(w_x(L)+\xi(L)+s(L))\\
-\xi_{x}(L)\\
-s_{x}(L)
\end{bmatrix}.
\end{equation}
Taking into account the spaces defined in the introduction and denoting by  $\rho(\mathcal{A})$ the resolvent set of the operator $\mathcal{A}$. The following result ensures the invertibility of the operator $\mathcal{A}$.
\begin{lemma}\label{0inresolv}
  Let $\mathcal{H}$ and $\left(\mathcal{A},D\left(\mathcal{A}\right)\right)$ be defined as before. Then, $0 \in \rho(\mathcal{A})$. Moreover, $\mathcal{A}^{-1}$ is compact.
\end{lemma}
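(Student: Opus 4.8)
The plan is to prove that $\mathcal{A}\colon D(\mathcal{A})\to\mathcal{H}$ is a bijection with bounded inverse by solving the stationary equation $\mathcal{A}U=F$ explicitly for an arbitrary right-hand side $F=(f_1,\dots,f_9)^\top\in\mathcal{H}$, and then reading off compactness from the gain of regularity. Writing out the nine scalar equations, the first, third and fifth components immediately give $\Psi_1=f_1$, $\Psi_2=f_3$, $\Psi_3=f_5\in H_*^1(0,L)$, and the definitional constraints in $D(\mathcal{A})$ fix $\Psi_4=f_1(L)$, $\Psi_5=f_3(L)$, $\Psi_6=f_5(L)$ (well defined since $H^1\hookrightarrow C[0,L]$). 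The remaining unknowns $(w,\xi,s)$ must satisfy the three interior equations together with $w(0)=\xi(0)=s(0)=0$ and the three boundary relations coming from the last three components, namely $w_x(L)+\xi(L)+s(L)=-f_7$, $\xi_x(L)=-f_8$ and $s_x(L)=-f_9$.

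The key idea that decouples the system is to treat $\phi:=w_x+\xi+s$ as a single unknown. The second interior equation reads $k\,\phi_x=\rho_1 f_2$, a first-order ODE; integrating it from $x$ to $L$ and using the boundary relation $\phi(L)=-f_7$ determines $\phi$ completely, namely $\phi(x)=-f_7-\tfrac{\rho_1}{k}\int_x^L f_2\,dy\in H^1(0,L)$. With $\phi$ now known, the fourth equation becomes the mixed Dirichlet--Neumann problem $b\,\xi_{xx}=\rho_2 f_4+k\phi$, $\xi(0)=0$, $\xi_x(L)=-f_8$, which has a unique solution $\xi\in H^2\cap H_*^1$ by direct double integration (equivalently, by Lax--Milgram applied to the form $b\int\xi'v'$, coercive on $H_*^1$ via Poincaré). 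Likewise the sixth equation gives $b\,s_{xx}-\gamma s=\rho_2 f_6+3k\phi$, $s(0)=0$, $s_x(L)=-f_9$, and since $b,\gamma>0$ the bilinear form $b\int s'v'+\gamma\int sv$ is coercive on $H_*^1$, so $s\in H^2\cap H_*^1$ is uniquely determined. Finally $w$ is recovered from $w_x=\phi-\xi-s$ with $w(0)=0$; one checks that the leftover boundary relation $w_x(L)+\xi(L)+s(L)=-f_7$ holds automatically, since it is exactly $\phi(L)=-f_7$. A short regularity check ($\phi\in H^1$ and $\xi,s\in H^2$ give $w_x\in H^1$, hence $w\in H^2$) shows $U\in D(\mathcal{A})$, so $\mathcal{A}$ is bijective.

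Boundedness of $\mathcal{A}^{-1}$ then follows either by tracking the elementary $L^2$-estimates through each solution step above, which yields $\|U\|_{\mathcal{H}}\le C\|F\|_{\mathcal{H}}$, or more cleanly by noting that $\mathcal{A}$ is closed and bijective and invoking the bounded inverse theorem. For compactness I would argue that $\mathcal{A}^{-1}$ sends $\mathcal{H}$ into $D(\mathcal{A})$, and that the graph norm controls $w,\xi,s$ in $H^2(0,L)$ and $\Psi_1,\Psi_2,\Psi_3$ in $H^1(0,L)$: indeed $\|\mathcal{A}U\|_{\mathcal{H}}$ bounds $(w_x+\xi+s)_x$, $\xi_{xx}$, $s_{xx}$ and $w_{xx}$ in $L^2$, as well as $\Psi_{1,x},\Psi_{2,x},\Psi_{3,x}$ in $L^2$. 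Since $H^2(0,L)\hookrightarrow H^1(0,L)$ and $H^1(0,L)\hookrightarrow L^2(0,L)$ are compact (Rellich) and any bounded map into the finite-dimensional $\mathbb{R}^3$ part is compact, bounded sets in $D(\mathcal{A})$ are relatively compact in $\mathcal{H}$, so $\mathcal{A}^{-1}$ is compact. The main obstacle is the coupling of all three interior equations through the common term $w_x+\xi+s$; the device of first solving for $\phi$ from the lone first-order equation, with boundary datum supplied by the seventh component, is precisely what turns an apparently coupled system into three independent scalar boundary value problems. A secondary point requiring care is the seemingly over-determined boundary condition at $x=L$, whose consistency must be verified; it holds because it merely reproduces the value $\phi(L)=-f_7$ already imposed when integrating the first-order equation.
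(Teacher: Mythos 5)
Your proposal is correct, and it reaches the conclusion by a genuinely different route than the paper. The paper keeps the three unknowns $(w,\xi,s)$ coupled: it multiplies the stationary equations by test functions $3w^{*},3\xi^{*},s^{*}$ (and the boundary equations by $3kw^{*}(L),3b\xi^{*}(L),bs^{*}(L)$), integrates by parts, and obtains a single variational problem $\mathcal{B}((w,\xi,s),(w^{*},\xi^{*},s^{*}))=\mathcal{L}(w^{*},\xi^{*},s^{*})$ on $[H_*^1(0,L)]^3$, where
\begin{equation*}
\mathcal{B}((w,\xi,s),(w,\xi,s))=3b\|\xi_x\|^2+b\|s_x\|^2+\gamma\|s\|^2+3k\|w_x+\xi+s\|^2
\end{equation*}
is shown to be coercive (via Poincar\'e-type arguments); Lax--Milgram then gives the weak solution, elliptic regularity upgrades it to $D(\mathcal{A})$, boundedness of $\mathcal{A}^{-1}$ is obtained by citing the computations of an earlier reference, and compactness follows from the compact injection $D(\mathcal{A})\hookrightarrow\mathcal{H}$. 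You instead exploit the one-dimensional structure and the fact that the equations couple only through the shear term: setting $\phi=w_x+\xi+s$, the lone first-order equation $k\phi_x=\rho_1 f_2$ together with the boundary datum $\phi(L)=-f_7$ determines $\phi$ explicitly, after which $\xi$ and $s$ solve two independent scalar mixed Dirichlet--Neumann problems and $w$ is recovered by quadrature, with the apparently over-determined condition at $x=L$ verified to be consistent by construction. What your decoupling buys is transparency: the regularity $U\in D(\mathcal{A})$, the uniqueness, and the a priori bound $\|U\|_{\mathcal{H}}\le C\|F\|_{\mathcal{H}}$ all drop out of the explicit formulas, so you need neither an elliptic-regularity citation nor the external reference the paper invokes for boundedness of $\mathcal{A}^{-1}$; the price is that the argument is tied to the 1D setting and to this particular coupling structure, whereas the paper's coupled Lax--Milgram scheme is the more robust template (it would survive variable coefficients or higher-dimensional analogues where explicit integration is unavailable). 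Your compactness argument is the same in spirit as the paper's (Rellich plus the finite-dimensional component), just carried out with more detail on why the graph norm controls $[H^2\times H^1]^3\times\mathbb{R}^3$. One small caveat: your alternative route to boundedness via ``closed and bijective plus the bounded inverse theorem'' presupposes closedness of $\mathcal{A}$, which you do not verify; since your primary route (tracking the estimates through the construction) is complete, this is a cosmetic rather than substantive issue.
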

\begin{proof}
For $F=\left(f_1, f_2, f_3, f_4, f_5, f_6, f_7, f_8, f_9 \right) \in \mathcal{H}$, we will show the existence of $$U=(w,\Psi_1, \xi,\Psi_2,s,\Psi_3, \Psi_4, \Psi_5, \Psi_6)^{\top} \in D\left(\mathcal{A}\right),$$ unique solution of the equation
$$
\mathcal{A} U=F.
$$
Equivalently, one must consider the system given by    
\begin{equation}
\begin{cases}
 \Psi_1= f_1, \quad \Psi_2=f_3, \quad \Psi_3=f_5, \label{eq:system1_1}\\
 k\left(w_x+\xi+s\right)_x=\rho_1 f_2, \\
 b \xi_{xx}-k(w_x+\xi+s)=\rho_2 f_4, \\
 b s_{xx}-3k(w_x+\xi+s) -\gamma s=\rho_2f_6,\\
 -(w_x(L)+\xi(L)+s(L)) = f_7,\\
-\xi_x(L)=f_8, \\
-s_x(L)=f_9,
\end{cases}
\end{equation}
with the following boundary conditions
\begin{equation}\label{eq:system_cond1}
w(0)=\xi(0)=s(0)=0.
\end{equation}

Consider $(w^{*}, \xi^{*},  s^{*}) \in  [H_*^1(0,L)]^3$, so multiplying the equation $\eqref{eq:system1_1}_2$, $\eqref{eq:system1_1}_3$,  $\eqref{eq:system1_1}_4$, $\eqref{eq:system1_1}_5$,  $\eqref{eq:system1_1}_6$ and $\eqref{eq:system1_1}_7$ by  $3w^{*},$ $ 3\xi^{*},$ $ s^{*}$, $3kw^{*}(L),$ $ 3b\xi^{*}(L),$ $ bs^{*}(L)$,  respectively, integrating by parts in $(0,L)$ and taking the sum, we get that
\begin{equation} \label{eq:bilinear_def0}
\mathcal{B}((w, \xi,  s),(w^{*}, \xi^{*},  s^{*}))=\mathcal{L}(w^{*}, \xi^{*},  s^{*}), \quad \forall(w^{*}, \xi^{*},  s^{*}) \in  [H_*^1(0,L)]^3,
\end{equation}
thanks to the boundary conditions \eqref{eq:system_cond1}. Here
\begin{equation}
\begin{aligned}\label{eq:bilinear_def1}
\mathcal{B}((w, \xi,  s),(w^{*}, \xi^{*},  s^{*})) =  3b\left\langle\xi_x, \xi_x^{*}\right\rangle + b\left\langle s_x, s_x^{*}\right\rangle  +  \gamma \left\langle s, s^{*}\right\rangle +  3k\left\langle w_x+\xi+s, w_x^{*}+\xi^{*}+s^{*}\right\rangle,
\end{aligned}
\end{equation}and 
\begin{equation}\label{eq:bilinear_def2}
\begin{split}
  \mathcal{L}(w^{*}, \xi^{*}, s^{*})=&-3\rho_1\left\langle f_1, w^{*}\right\rangle-3\rho_2\left\langle f_4, \xi^{*}\right\rangle-\rho_2\left\langle f_6, s^{*}\right\rangle\\
  &+3kf_7w^{*}(L) + 3bf_8 \xi^{*}(L)+ bf_9 s^{*}(L).
  \end{split}
\end{equation}

Now, observe that $$\mathcal{B}:\left[H_*^1(0,L)\right]^3 \times [H_*^1(0,L)]^3 \longrightarrow \mathbb{C}$$ defined by \eqref{eq:bilinear_def1} is a bilinear form which one is continuous and coercive. Moreorver, $$\mathcal{L}:   [H_*^1(0,L)]^3\longrightarrow \mathbb{C},$$ given by \eqref{eq:bilinear_def2}, is a linear continuous form. Then, using Lax-Milgram theorem, we deduce that there exists $(w, \xi,  s) \in[H_*^1(0,L)]^3$ a unique weak solution of the variational Problem  \eqref{eq:bilinear_def0}. From $\eqref{eq:system1_1}_1$ and applying the classical elliptic regularity theory we deduce that $$U=(w,\Psi_1, \xi,\Psi_2,s,\Psi_3, \Psi_4, \Psi_5, \Psi_6)^{\top} \in D\left(\mathcal{A}\right)$$  and, consequently, $\mathcal{A}$ is bijective.

Following similar calculations as done in \cite[Proposition 2]{CQS}, we prove that $\mathcal{A}^{-1}$ is bounded. Thus, $0 \in \rho(\mathcal{A})$. Finally, by Sobolev’s embedding theorem, we deduce that the canonical injection $$i: D\left(\mathcal{A}\right) \hookrightarrow \mathcal{H}$$ is compact, and therefore, $\mathcal{A}^{-1}$ is also compact, showing the proposition. \end{proof}

The next result ensures that $\mathcal{A}$ generates a group. The result can be read as follows. 
\begin{theorem}\label{teogeneinfiA}
The operator $\left(\mathcal{A}, D\left(\mathcal{A}\right)\right)$  is an infinitesimal generator of a group of isometries $\{S(t)\}_{t\in\mathbb{R}}$ in $\mathcal{H}$.
\end{theorem}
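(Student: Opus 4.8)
The plan is to apply the Lumer--Phillips theorem in its variant for groups, which reduces the problem to showing that both $\mathcal{A}$ and $-\mathcal{A}$ are dissipative together with a range condition. Since Lemma \ref{0inresolv} already gives $0\in\rho(\mathcal{A})$, the operator is onto and closed, so the range condition is automatic; the heart of the matter is the dissipativity estimate. First I would verify that $\mathcal{A}$ is densely defined, which follows from the density of the smooth functions vanishing at $x=0$ in $H^1_*(0,L)$ together with the compatibility constraints defining $D(\mathcal{A})$. The main computation is then to evaluate $\langle \mathcal{A}U,U\rangle_{\mathcal{H}}$ for $U\in D(\mathcal{A})$ using the weighted inner product.

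The key step is the energy identity. Writing out $\langle \mathcal{A}U,U\rangle_{\mathcal{H}}$ from the definition \eqref{opae2} and the inner product on $\mathcal{H}$, I expect each interior term produced by the second-order operators to be integrated by parts in $(0,L)$. The Dirichlet conditions at $x=0$ kill the lower boundary terms, while the boundary terms generated at $x=L$ should cancel precisely against the three scalar components $\Psi_4,\Psi_5,\Psi_6$ of $\mathcal{A}U$ — this is exactly why the Venttsel variables are incorporated into the phase space with the weights $3k,3b,b$. Concretely, the terms $3k\langle \tfrac{1}{\rho_1}k(w_x+\xi+s)_x,\,\Psi_1\rangle\cdot 3\rho_1$ and its analogues, after integration by parts, yield boundary contributions at $L$ of the form $3k(w_x(L)+\xi(L)+s(L))\Psi_1(L)$, etc., which are matched by the scalar pairings $3k(\Psi_4,-(w_x(L)+\xi(L)+s(L)))_{\mathbb{R}}$ coming from the last three rows of $\mathcal{A}U$. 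I anticipate that all first-order coupling terms in $k(w_x+\xi+s)$ cancel in the skew-symmetric fashion, and that the computation yields
\begin{equation*}
\operatorname{Re}\langle \mathcal{A}U,U\rangle_{\mathcal{H}}=0,\qquad U\in D(\mathcal{A}),
\end{equation*}
reflecting the absence of the damping term $\beta s_t$ in \eqref{2bbm-hom}. The same identity with $-\mathcal{A}$ shows both operators are dissipative, which is the hallmark of a skew-adjoint-type generator.

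Once the identity $\operatorname{Re}\langle \mathcal{A}U,U\rangle_{\mathcal{H}}=0$ is established, the argument concludes quickly. Since $\mathcal{A}$ and $-\mathcal{A}$ are both dissipative and $0\in\rho(\mathcal{A})$ gives $\operatorname{Range}(I-\mathcal{A})=\mathcal{H}$ (indeed $\operatorname{Range}(\lambda I-\mathcal{A})=\mathcal{H}$ for $\lambda$ near $0$, and one can bootstrap to all $\lambda>0$ via the standard continuation argument using the uniform resolvent bound that dissipativity supplies), the Lumer--Phillips theorem applies to both $\mathcal{A}$ and $-\mathcal{A}$. Hence each generates a $C_0$-semigroup of contractions, and together these assemble into a $C_0$-group $\{S(t)\}_{t\in\mathbb{R}}$. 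Finally, because $\operatorname{Re}\langle \mathcal{A}U,U\rangle_{\mathcal{H}}=0$ forces $\tfrac{d}{dt}\|S(t)U_0\|_{\mathcal{H}}^2=2\operatorname{Re}\langle \mathcal{A}S(t)U_0,S(t)U_0\rangle_{\mathcal{H}}=0$, the energy is conserved and the group is a group of isometries.

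The main obstacle I expect is the bookkeeping in the energy identity: one must track every boundary term at $x=L$ and confirm that the weights $3\rho_1,3\rho_2,\rho_2,3b,b,\gamma,3k$ in the inner product, together with the scaling factors $3k,3b,b$ on the scalar components, are chosen exactly so that the boundary contributions cancel and the Venttsel dynamics are absorbed with no residual. A secondary subtlety is justifying that the Venttsel boundary variables $\Psi_4,\Psi_5,\Psi_6$ evolve consistently — i.e.\ that the constraints $\Psi_4=\Psi_1(L)$, $\Psi_5=\Psi_2(L)$, $\Psi_6=\Psi_3(L)$ built into $D(\mathcal{A})$ are preserved — but this is encoded directly in the structure of $\mathcal{A}$ and the domain, so it requires only a consistency check rather than new analysis.
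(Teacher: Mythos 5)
Your proposal is correct, and it takes a genuinely different functional-analytic route from the paper, even though the computational core is the same. The paper proves that $\mathcal{A}$ is \emph{skew-adjoint}: it first establishes the off-diagonal identity $\left\langle \mathcal{A}U, U^{\sharp}\right\rangle_{\mathcal{H}} = \left\langle U, -\mathcal{A}U^{\sharp}\right\rangle_{\mathcal{H}}$ for all $U, U^{\sharp}\in D(\mathcal{A})$ (the same integration by parts you outline, with the same cancellation of the $x=L$ boundary terms against the scalar components weighted by $3k,3b,b$), and then carries out the delicate half that you avoid entirely: taking an arbitrary $U^{\sharp}\in D(\mathcal{A}^{*})$, isolating the residual boundary pairings, and concluding $D(\mathcal{A}^{*})=D(\mathcal{A})$ with $\mathcal{A}^{*}=-\mathcal{A}$, after which the group of isometries follows from Stone's theorem. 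You instead prove only the diagonal identity $\operatorname{Re}\left\langle \mathcal{A}U,U\right\rangle_{\mathcal{H}}=0$ (which I verified does hold with the paper's weights: the $\gamma\left\langle s,\cdot\right\rangle$ term and all the $k$-coupling terms cancel exactly as you predict), combine it with the range condition supplied by Lemma \ref{0inresolv} and the openness of the resolvent set, and invoke the group version of Lumer--Phillips. What your route buys is economy: no characterization of $D(\mathcal{A}^{*})$ is needed, which is the most technical part of the paper's argument. What the paper's route buys is the explicit structural fact $\mathcal{A}^{*}=-\mathcal{A}$, which is not merely decorative: the adjoint system and the space $D(\mathcal{A}^{*})$ reappear in the observability analysis of Section \ref{sec:3}, so identifying the adjoint pays off later. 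Two small points to tighten in your write-up: (i) the bootstrapping of the range condition to all $\lambda>0$ is unnecessary --- Lumer--Phillips needs surjectivity of $\lambda_0 I-\mathcal{A}$ for a \emph{single} $\lambda_0>0$, and since $\rho(\mathcal{A})$ is open and contains $0$, some $\pm\lambda_0\in\rho(\mathcal{A})$ with $\lambda_0>0$ exists, handling both $\mathcal{A}$ and $-\mathcal{A}$ at once; (ii) the density of $D(\mathcal{A})$ in $\mathcal{H}$, which you correctly flag as a hypothesis of Lumer--Phillips, deserves the short explicit argument (traces at $x=L$ are not $L^2$-continuous, so the constraints $\Psi_4=\Psi_1(L)$, etc., do not obstruct approximation) --- note the paper needs this too for $\mathcal{A}^{*}$ to be well defined, but never checks it.
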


\begin{proof}
Let $$U^{\sharp}=(w^{\sharp},\Psi_1^{\sharp}, \xi^{\sharp},\Psi_2^{\sharp}, s^{\sharp}, \Psi_3^{\sharp}, \Psi_4^{\sharp}, \Psi_5^{\sharp}, \Psi_6^{\sharp})^{\top} \in D\left(\mathcal{A}\right).$$  Then, for all $$U=(w,\Psi_1, \xi,\Psi_2,s,\Psi_3, \Psi_4, \Psi_5, \Psi_6)^{\top} \in D\left(\mathcal{A}\right),$$ we deduce that
\begin{align*}
\left\langle 
\mathcal{A}U, U^{\sharp}\right\rangle_\mathcal{H} = & 3 \rho_1\left\langle \dfrac{1}{\rho_1}\left[k(w_x+\xi+s)_x\right], \Psi_1^{\sharp}\right\rangle + 3 \rho_2\left\langle \dfrac{1}{\rho_2}\left[b \xi_{xx}-k(w_x+\xi+s)\right], \Psi_2^{\sharp}\right\rangle \\
&+ \rho_2\left\langle \dfrac{1}{\rho_2}\left[b s_{xx} -3k(w_x+\xi+s) -\gamma s\right], \Psi_3^{\sharp}\right\rangle +3 b\left\langle \Psi_{2,x}, \xi_x^{\sharp}\right\rangle  +b\left\langle \Psi_{3,x}, s_x^{\sharp}\right\rangle   + \gamma \left\langle \Psi_{3}, s^{\sharp}\right\rangle \\
&+  3 k\left\langle \Psi_{1,x}+\Psi_2+\Psi_3, w_x^{\sharp}+\xi^{\sharp}+s^{\sharp}\right\rangle   + 3k\left(-(w_x(L)+\xi(L)+s(L)) ,\Psi_4^{\sharp}   \right)_{\mathbb{R}}\\
&+3b\left(-\xi_x(L),\Psi_5^{\sharp}   \right)_{\mathbb{R}}+b\left(-s_x(L),\Psi_6^{\sharp}   \right)_{\mathbb{R}}\\ 
= & 3 \rho_1\left\langle \Psi_1, -\dfrac{1}{\rho_1}\left[k(w^{\sharp}_x+\xi^{\sharp}+s^{\sharp})_x\right]\right\rangle+3 \rho_2\left\langle \Psi_2, - \dfrac{1}{\rho_2}\left[b \xi^{\sharp}_{xx}-k(w^{\sharp}_x+\xi^{\sharp}+s^{\sharp})\right]\right\rangle\\
&+\rho_2\left\langle \Psi_3, -\dfrac{1}{\rho_2}\left[b s^{\sharp}_{xx} -3k(w^{\sharp}_x+\xi^{\sharp}+s^{\sharp})-\gamma s^{\sharp}\right]\right\rangle+3 b\left\langle\xi_x, -\Psi_{2,x}^{\sharp}\right\rangle +b\left\langle s_x, -\Psi_{3,x}^{\sharp}\right\rangle \\
& + \gamma \left\langle s, -\Psi_{3}^{\sharp}\right\rangle +3 k\left\langle w_x+\xi+s, -(\Psi_{1,x}^{\sharp}+\Psi_2^{\sharp}+\Psi_3^{\sharp})\right\rangle + 3k\left(\Psi_4 ,(w^{\sharp}_x(L)+\xi^{\sharp}(L)+s^{\sharp}(L))   \right)_{\mathbb{R}}\\
&+3b\left(\Psi_5,\xi_x^{\sharp} (L)  \right)_{\mathbb{R}}+b\left(\Psi_6,s_x^{\sharp}(L)   \right)_{\mathbb{R}}\\
=&\left\langle 
U, -\mathcal{A} U^{\sharp}\right\rangle_\mathcal{H}.
\end{align*}Hence, $U^{\sharp} \in D\left(\mathcal{A}^{*}\right)$ and $\mathcal{A}^{*}U^{\sharp} = -\mathcal{A}U^{\sharp}$, for any $U^{\sharp} \in D\left(\mathcal{A}\right)$. 

On the other hand, let $U^{\sharp} =(w^{\sharp},\Psi_1^{\sharp}, \xi^{\sharp},\Psi_2^{\sharp}, s^{\sharp}, \Psi_3^{\sharp}, \Psi_4^{\sharp}, \Psi_5^{\sharp}, \Psi_6^{\sharp})^{\top}\in D\left(\mathcal{A}^{*}\right)$. Then, there exists $U_0 \in \mathcal{H}$, such that 
\begin{align}\label{adj_1}
\left\langle 
\mathcal{A}U, U^{\sharp}\right\rangle_\mathcal{H} = \left\langle 
U, U_0\right\rangle_\mathcal{H}, \text{ for all }\ \ U=(w,\Psi_1, \xi,\Psi_2,s,\Psi_3, \Psi_4, \Psi_5, \Psi_6)^{\top} \in D\left(\mathcal{A}\right).
\end{align}
By the same calculations as done before, we have that
\begin{equation}\label{adj_2}
\begin{split}
\left\langle 
\mathcal{A}U, U^{\sharp}\right\rangle_\mathcal{H}  =   &\left\langle 
U, -\mathcal{A} U^{\sharp}\right\rangle_\mathcal{H} +  3k\left((w_x(L)+\xi(L)+s(L)) , \Psi_1^{\sharp}(L)-\Psi_4^{\sharp}   \right)_{\mathbb{R}} \\
&+3b\left(\xi_x(L), \Psi_2^{\sharp}(L)-\Psi_5^{\sharp}   \right)_{\mathbb{R}}+b\left(s_x(L),\Psi_3^{\sharp}(L)-\Psi_6^{\sharp}   \right)_{\mathbb{R}}  \\
&-3k\left(w_x(0) , \Psi_1^{\sharp}(0)   \right)_{\mathbb{R}} -3b\left(\xi_x(0), \Psi_2^{\sharp}(0)   \right)_{\mathbb{R}}-b\left(s_x(0),\Psi_3^{\sharp}(0)   \right)_{\mathbb{R}}. 
\end{split}
\end{equation}Therefore, taking the diference between \eqref{adj_1} and \eqref{adj_2} we obtain the following identity
\begin{align*}
0=&\left\langle U, U_0+\mathcal{A} U^{\sharp}\right\rangle_\mathcal{H} + 3k\left((w_x(L)+\xi(L)+s(L)) , \Psi_4^{\sharp} - \Psi_1^{\sharp}(L)  \right)_{\mathbb{R}} \\
&+3b\left(\xi_x(L), \Psi_5^{\sharp} - \Psi_2^{\sharp}(L)  \right)_{\mathbb{R}}  +b\left(s_x(L),\Psi_6^{\sharp} - \Psi_3^{\sharp}(L)  \right)_{\mathbb{R}} + 3k\left(w_x(0) , \Psi_1^{\sharp}(0)   \right)_{\mathbb{R}} \\
&+3b\left(\xi_x(0), \Psi_2^{\sharp}(0)   \right)_{\mathbb{R}}+b\left(s_x(0),\Psi_3^{\sharp}(0)   \right)_{\mathbb{R}}, \end{align*}
for all $U\in D\left(\mathcal{A}\right).$ From the  above equality, we obtain that $U^{\sharp} \in D\left(\mathcal{A}\right)$ and 
\begin{align*}
 \mathcal{A}^{*}U^{\sharp} = U_0=-\mathcal{A}U^{\sharp}, \ \  \text{ for any}  \ U^{\sharp} \in D\left(\mathcal{A}^{*}\right). 
\end{align*}Thus, $D\left(\mathcal{A}\right)=D\left(\mathcal{A}^{*}\right)$ and $\mathcal{A}^{*}= - \mathcal{A}$. We deduce that   $\mathcal{A}$ is a skew-adjoint operator  and,
therefore, generates a group of isometries $\{S(t)\}_{t\in\mathbb{R}}$ in $\mathcal{H}$, giving the result.    
\end{proof}

As a direct consequence of Lemma \ref{0inresolv} and Theorem \ref{teogeneinfiA}, and by applying semigroup theory for evolution equations\footnote{See, for instance, \cite{Pazy}.}, we obtain the following existence and uniqueness result.
\begin{theorem}\label{ex-hom}
For any $U_0 \in D\left(\mathcal{A}\right)$, there exists a unique solution $U$ of the system \eqref{abs} such that
$$
U  \in C\left([0, \infty); D\left(\mathcal{A}\right) \right) \cap C^1\left([0, \infty); \mathcal{H}\right).
$$
Moreover, for any $t>0$, we have that
\begin{align}\label{identi_gisometr}
\Vert U(t) \Vert^2_{\mathcal{H}} =\Vert U_0 \Vert^2_{\mathcal{H}}.
\end{align}
\end{theorem}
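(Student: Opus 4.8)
The plan is to read off the result directly from the two preceding statements together with the classical generation theory for $C_0$-semigroups, so that the present theorem becomes essentially a packaging step. Since Theorem \ref{teogeneinfiA} guarantees that $\mathcal{A}$ is the infinitesimal generator of a $C_0$-group of isometries $\{S(t)\}_{t\in\mathbb{R}}$ on $\mathcal{H}$, I would define the candidate solution by $U(t):=S(t)U_0$ and then verify that it enjoys all the required properties.

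First I would invoke the standard regularity result for generators of $C_0$-semigroups (see, e.g., \cite{Pazy}): if $U_0\in D(\mathcal{A})$, then $t\mapsto S(t)U_0$ is continuously differentiable, remains in $D(\mathcal{A})$ for every $t$, and satisfies $\frac{d}{dt}S(t)U_0=\mathcal{A}S(t)U_0=S(t)\mathcal{A}U_0$. This yields at once that $U\in C\left([0,\infty);D(\mathcal{A})\right)\cap C^1\left([0,\infty);\mathcal{H}\right)$ and that $U$ solves the abstract Cauchy problem \eqref{abs} with $U(0)=U_0$. Because $\mathcal{A}$ generates a \emph{group}, these conclusions in fact hold for all $t\in\mathbb{R}$, and a fortiori on $[0,\infty)$.

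For uniqueness I would argue that if $\widetilde U$ is any solution in the stated class, then $V:=U-\widetilde U$ solves \eqref{abs} with $V(0)=0$; differentiating $\|V(t)\|^2_{\mathcal{H}}$ and using that $\mathcal{A}$ is skew-adjoint (established in the proof of Theorem \ref{teogeneinfiA}, where $\mathcal{A}^{*}=-\mathcal{A}$) gives $\frac{d}{dt}\|V(t)\|^2_{\mathcal{H}}=2\,\mathrm{Re}\,\langle \mathcal{A}V,V\rangle_{\mathcal{H}}=0$, whence $V\equiv 0$. The same isometry property immediately delivers the conservation identity \eqref{identi_gisometr}: since each $S(t)$ is an isometry, $\|U(t)\|_{\mathcal{H}}=\|S(t)U_0\|_{\mathcal{H}}=\|U_0\|_{\mathcal{H}}$, which is precisely the asserted energy equality.

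There is no serious obstacle at this stage, and I would say so plainly: the substantive work—the skew-adjointness of $\mathcal{A}$, the range condition handled via Lax--Milgram, and the compactness of the resolvent—was already carried out in Lemma \ref{0inresolv} and Theorem \ref{teogeneinfiA}. The only points requiring a modicum of care are checking that the hypotheses of the abstract semigroup theorem are genuinely met (strong continuity of the group and $U_0\in D(\mathcal{A})$) and recognizing that it is the group structure, rather than a mere semigroup, that upgrades the regularity statement to a two-sided one; both are immediate from Theorem \ref{teogeneinfiA}.
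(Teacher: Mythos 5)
Your proposal is correct and follows essentially the same route as the paper: the authors likewise obtain Theorem \ref{ex-hom} as a direct consequence of Lemma \ref{0inresolv} and Theorem \ref{teogeneinfiA} via the standard generation theory in \cite{Pazy}, with the conservation identity \eqref{identi_gisometr} read off from the fact that $\{S(t)\}_{t\in\mathbb{R}}$ is a group of isometries. Your added details (the Pazy regularity statement for $U_0\in D(\mathcal{A})$ and the uniqueness argument via skew-adjointness) simply flesh out what the paper leaves implicit.
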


\subsection{The nonhomogeneous system}

In this subsection, we focus on the complete system \eqref{2bbm} and begin by presenting the following well-posedness result.

 \begin{theorem}
For any $U_0 \in D\left(\mathcal{A}\right)  $
and $ u_i \in  C^{\infty}_0(0,\infty)$, for $ i=1,2,3 $, system \eqref{2bbm} has a unique solution  $U  \in C\left([0, \infty); D\left(\mathcal{A}\right) \right) \cap C^1\left([0, \infty); \mathcal{H}\right).$
\end{theorem}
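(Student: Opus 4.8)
The plan is to recast the nonhomogeneous problem \eqref{2bbm} as an inhomogeneous abstract Cauchy problem driven by the controls, and then to invoke the semigroup machinery already established for the homogeneous case. Keeping the variables $\Psi_4=w_t(L)$, $\Psi_5=\xi_t(L)$, $\Psi_6=s_t(L)$ introduced in the homogeneous reduction, the controlled boundary conditions read
\[
w_{tt}(L,t)=-(w_x(L,t)+\xi(L,t)+s(L,t))+u_1(t),\quad \xi_{tt}(L,t)=-\xi_x(L,t)+u_2(t),\quad s_{tt}(L,t)=-s_x(L,t)+u_3(t),
\]
which show that the controls enter only the last three (finite–dimensional, ODE–type) slots of the evolution, exactly the slots governed by the last three components of $\mathcal{A}$ in \eqref{opae2}. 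Hence, with $U=(w,\Psi_1,\xi,\Psi_2,s,\Psi_3,\Psi_4,\Psi_5,\Psi_6)^{\top}$, system \eqref{2bbm} is equivalent to
\[
U_t=\mathcal{A}U+F(t),\qquad U(0)=U_0,
\]
where $F(t):=(0,0,0,0,0,0,u_1(t),u_2(t),u_3(t))^{\top}$. The associated control operator $u\mapsto F$ is bounded from $\mathbb{R}^3$ into $\mathcal{H}$, since it merely places $u$ into finitely many coordinates.

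Next, I would record the regularity of the forcing term. Because each $u_i\in C^{\infty}_0(0,\infty)$, the map $t\mapsto F(t)$ belongs to $C^{\infty}([0,\infty);\mathcal{H})$, and in particular $F\in C^1([0,\infty);\mathcal{H})$. This is precisely the regularity needed to pass from a mild to a classical solution in the standard theory of inhomogeneous Cauchy problems. I would stress, however, that $F(t)\notin D(\mathcal{A})$ in general: its $\Psi_4,\Psi_5,\Psi_6$ components need not coincide with $\Psi_1(L),\Psi_2(L),\Psi_3(L)$, which vanish for $F$. Consequently I cannot rely on $D(\mathcal{A})$–valued continuity of $F$ and must instead exploit its differentiability in time.

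Finally, since $\mathcal{A}$ generates the $C_0$–group of isometries $\{S(t)\}_{t\in\mathbb{R}}$ by Theorem \ref{teogeneinfiA}, I would apply the classical result on inhomogeneous abstract Cauchy problems (see \cite{Pazy}): for $U_0\in D(\mathcal{A})$ and $F\in C^1([0,\infty);\mathcal{H})$, the variation–of–constants (Duhamel) formula
\[
U(t)=S(t)U_0+\int_0^t S(t-\tau)F(\tau)\,d\tau
\]
defines the unique classical solution, with $U\in C([0,\infty);D(\mathcal{A}))\cap C^1([0,\infty);\mathcal{H})$. Uniqueness follows by applying the uniqueness statement of Theorem \ref{ex-hom} to the difference of two solutions. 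The only genuinely delicate point is the first step, namely verifying that the Venttsel controls act solely on the finite–dimensional boundary components so that the forcing remains in $\mathcal{H}$ and the abstract reformulation is exact; once this is in place, the conclusion is a direct application of the semigroup theory already developed.
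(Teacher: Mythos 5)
Your proof is correct, and it takes a genuinely different route from the paper's. You exploit the structural fact that the Venttsel conditions are \emph{dynamic}: the traces $w_t(L)$, $\xi_t(L)$, $s_t(L)$ are themselves components of the state, so the controls enter as a bounded forcing $F(t)=(0,\ldots,0,u_1(t),u_2(t),u_3(t))^{\top}\in\mathcal{H}$, smooth in time, and the classical result of \cite{Pazy} on inhomogeneous Cauchy problems with $C^1$-in-time forcing applies directly; your remark that $F(t)\notin D(\mathcal{A})$ (its $\Psi_1,\Psi_2,\Psi_3$ slots vanish while $\Psi_4,\Psi_5,\Psi_6$ do not), so that one must use the $C^1$-in-time rather than the $D(\mathcal{A})$-valued hypothesis, is exactly the right point to flag, and continuity into $D(\mathcal{A})$ then follows since $\mathcal{A}U=U_t-F$ is continuous in $\mathcal{H}$. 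The paper instead homogenizes the boundary: it takes a cutoff $\phi_1$ with $\phi_1(0)=\phi_1(L)=0$ and $\phi_1'(L)=-1$, sets $(z,\eta,\varphi)=(w,\xi,s)-(\tilde w,\tilde\xi,\tilde s)+(u_1\phi_1,u_2\phi_1,u_3\phi_1)$, where $(\tilde w,\tilde\xi,\tilde s)$ solves the homogeneous system with the given initial data, so that the lift cancels the controls in the conditions at $x=L$ and produces an interior $C^1$ source built from $u_i$, $u_i''$ and derivatives of $\phi_1$; semigroup theory is then applied to the resulting problem with zero data and homogeneous boundary conditions, and the change of variables is reverted. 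Your approach is leaner: it avoids the auxiliary homogeneous solution, the cutoff construction, and the bookkeeping for the lifted source, and it makes explicit that for dynamic boundary conditions the control operator is bounded. The paper's lifting technique is heavier here but more robust: it is the argument that would survive for static (Dirichlet/Neumann) boundary controls, where no finite-dimensional slot exists and the control operator is unbounded. One small caveat in your write-up: the exactness of the abstract reformulation rests on the solution remaining in $D(\mathcal{A})$ for all $t$, so that $\Psi_4=\Psi_1(L)=w_t(L,\cdot)$ and hence $\Psi_{4,t}=w_{tt}(L,\cdot)$ recovers the dynamic boundary condition; since the classical solution furnished by Pazy's theorem has precisely this property, the equivalence holds, but this verification should be stated as a step of the proof rather than only acknowledged as a ``delicate point.''
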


\begin{proof}
Let $ \phi_1 \in C^{\infty}\left[ 0, L \right]  $ be a cut-off function, such that $ \phi_1(0)=\phi_1(L)=0 $ and $ \phi_1^{\prime}(L)=-1 $. First, consider the change of variables
\begin{equation}\label{change}
\left(\begin{array}{c}
z \\ \eta \\ \varphi \end{array}\right) = \left(\begin{array}{c}
w \\ \xi \\ s \end{array}\right) - \left(\begin{array}{c}
\tilde{w} \\ \tilde{\xi} \\ \tilde{s} \end{array}\right) + \left(\begin{array}{c}
u_1(t)\phi_1(x) \\ u_2(t)\phi_1(x) \\ u_3(t)\phi_1(x) \end{array}\right).
\end{equation}
Here $ \left(\tilde{w}, \tilde{\xi}, \tilde{s}\right)$ is the unique  solution of the system
\begin{equation*}
\left\{\begin{array}{ll} \rho_{1}\tilde{w}_{tt}-k(  \tilde{w}_{x}+\tilde{\xi}+\tilde{s})_{x}   =0, &x\in(0,L),\,\,\, 
t>0,\\   
\rho_{2}\tilde{\xi}_{tt}-b\tilde{\xi}_{xx}+k( \tilde{ w}_{x}+\tilde{\xi}+\tilde{s})   
=0,  &x\in(0,L),\,\,\, 
t>0,\\
\rho_{2}\tilde{s}_{tt}-b\tilde{s}_{xx}+3k(  \tilde{w}_{x}+\tilde{\xi}+\tilde{s})  
+\gamma \tilde{s}=0, & x\in(0,L),\,\,\, t>0,\\
\tilde{w}(0, t)=\tilde{\xi}(0, t)= \tilde{s}(0, t) =0, & t>0,\\
\tilde{w}_{tt}(L, t)+\tilde{w}_x(L,t)+\tilde{\xi}(L,t)+\tilde{s}(L, t)=0, &
t>0,\\
\tilde{\xi}_{tt}(L, t)+\tilde{\xi}_{x}(L, t)= 0, &
t>0,\\
\tilde{s}_{tt}(L, t) + \tilde{s}_x(L, t) =0, &
t>0,\\
\left(  \tilde{w},\tilde{ \xi}, \tilde{s} \right)  \left( x,0 \right)  =\left(
w_{0}, \xi_{0}, s_{0}\right) ( x ), & x\in(0,L),\\
\left( \tilde{w}_{t}, \tilde{\xi}_{t}, \tilde{s}_{t}\right)  \left(  x,0\right)  =\left( w_{1}, \xi_{1}, s_{1}\right)  \left(  x\right), & x\in(0,L),\end{array}\right.
\end{equation*}
given by Theorem \ref{ex-hom}. Thus, $ \left(z, \eta, \varphi\right)$ solves the following problem
\begin{equation}\label{siste23}
\left\{\begin{array}{ll} \rho_{1}z_{tt}-k\left(  z_{x}+\eta+\varphi\right)  _{x}   =F, &x\in(0,L),\,\,\, 
t>0,\\   
\rho_{2}\eta_{tt}-b\eta_{xx}+k\left(  z_{x}+\eta+\varphi\right)   
=G,  &x\in(0,L),\,\,\, 
t>0,\\
\rho_{2}\varphi_{tt}-b\varphi_{xx}+3k\left(  z_{x}+\eta+\varphi\right)  
+ \gamma \varphi=H, & x\in(0,L),\,\,\, t>0,\\
z(0, t)=\eta(0, t)= \varphi(0, t) =0, & t>0,\\
z_{tt}(L, t)+z_x(L,t)+\eta(L,t)+\varphi(L, t)=0, &
t>0,\\
\eta_{tt}(L, t)+\eta_{x}(L, t)= 0, &
t>0,\\
\varphi_{tt}(L, t) + \varphi_x(L, t) =0, &
t>0,\\
\left(  z, \eta, \varphi \right)  \left( x,0 \right)  =\left(0, 0, 0\right), & x\in(0,L),\\
\left( z_{t}, \eta_{t}, \varphi_{t}\right)  \left(  x,0\right)  =\left( 0, 0, 0\right), & x\in(0,L),\end{array}\right.
\end{equation}where
\begin{equation*}
 \left\{\begin{array}{ll}  
F(t, x)= &\rho_{1} u_1^{(2)}(t)\phi_1(x) 
-k \left[ u_1(t)\phi_1^{(2)}(x) + \left ( u_2(t) + u_3(t)         \right)\phi^{\prime}_1(x) \right],\\
\\
G(t, x)=& \rho_{2} u_2^{(2)}(t)\phi_2(x) -b u_2(t) \phi_2^{(2)}(x) + k \left[ u_1(t)\phi_1^{\prime}(x) + \left ( u_2(t) + u_3(t)         \right)\phi_1(x) \right]\\
\\
H(t, x)=&\rho_{2} u_3^{(2)}(t)\phi_3(x) -b u_3(t) \phi_3^{(2)}(x) + 3k \left[ u_1(t)\phi_1^{\prime}(x) + \left ( u_2(t) + u_3(t)         \right)\phi_1(x) \right] + \gamma u_3(t)\phi_1(x),
 \end{array}\right.
\end{equation*}and
$$ \left(\begin{array}{c}
F(t,x) \\ G(t,x) \\ H(t, x)\end{array}\right)\in \left[
C^1(\left[ 0,\infty \right]\times \left[ 0, L \right] )\right]^3.$$

Observe that, by employing the same strategy as in the previous section, system \eqref{siste23} can be reformulated as the following abstract evolution equation:
\begin{equation}\label{sistem24}
\left\{
\begin{array}
[c]{l}
W_t + \mathcal{A} W= \mathcal{K}\\
W(0) = 0,
\end{array}\right.\nonumber
\end{equation}
where
$$
 W = (z, z_t, \eta, \eta_t, \varphi, 
 \varphi_t, z_t(L, T), \eta_t(L, t), \varphi_t(L, t) )^{\top},
 $$
 and 
 $$
 \mathcal{K} = \left(0, \frac{F}{\rho_1}, 0, \frac{G}{\rho_2}, 0, \frac{H}{\rho_2}, 0, 0, 0 \right)^{\top} \in\left[
C^1(\left[ 0,\infty \right]\times \left[ 0, L \right] )\right]^9.
$$
Finally, since $\mathcal{A}$ generates a group of isometries in $\mathcal{H}$, it follows that system \eqref{siste23} admits a unique solution
$$W \in C\left([0, \infty); D(\mathcal{A})\right) \cap C^1\left([0, \infty); \mathcal{H}\right).$$
Reverting the change of variables in \eqref{change}, we thus complete the proof.
\end{proof}

Using the previous well-posedness results for the full system, we will study solutions of the system \eqref{2bbm}  in the transposition sense. 
\begin{definition}
Given $T > 0$, $U_0 \in \mathcal{H}'$, $(h_1, h_2, h_3) \in  \left(L^2\left(0,T;  \left(H_*^1(0,L) \right)^{*}\right)\right)^3 $
and $ u_i \in  L^2(0,T),$ for $i=1,2,3 $, let us consider the non-homogeneous system given by
\begin{equation}\label{2bbm_nonhomo_2}
\left\{\begin{array}{ll} 
\rho_{1}w_{tt}-k\left(  w_{x}+\xi+s\right)_{x}   =h_1, &x\in(0,L),\,\,\, 
t \in (0, T),\\   
\rho_{2}\xi_{tt}-b\xi_{xx}+k\left(  w_{x}+\xi+s\right)   
=h_2,  &x\in(0,L),\,\,\, 
t\in (0, T),\\
\rho_{2}s_{tt}-bs_{xx}+3k\left(  w_{x}+\xi+s\right)  
+ \gamma s=h_3, & x\in(0,L),\,\,\, t\in (0, T),\\
w(0, t)=\xi(0, t)= s(0, t) =0, & t\in (0, T),\\
w_{tt}(L, t)+w_x(L,t)+\xi(L,t)+s(L, t)=u_1(t), &
t\in (0, T),\\
\xi_{tt}(L, t)+\xi_{x}(L, t)= u_2(t), &
t\in (0, T),\\
s_{tt}(L, t) + s_x(L, t) =u_3(t), &
t\in (0, T),\\
\left(  w, \xi, s \right)  \left( x,0 \right)  =\left(
w_{0}, \xi_{0}, s_{0}\right) ( x ), & x\in(0,L)\\
\left( w_{t}, \xi_{t}, s_{t}\right)  \left(  x,0\right)  =\left( w_{1}, \xi_{1}, s_{1}\right)  \left(  x\right), & x\in(0,L).\end{array}\right.\end{equation}
A vector function  $$U=(w,w_t, \xi,\xi_t,s, s_t, w_t(L), \xi_t(L), s_t(L))^{\top}\in C ([0,T]; D\left(\mathcal{A}\right)),$$ is said to be a solution by transposition to \eqref{2bbm_nonhomo_2} if the following identity holds
\begin{align}\label{equa2}
& \displaystyle \left\langle  \left(\begin{array}{c}
3 \rho_1 w_t(\cdot, \tau) \\ -3 \rho_1 w(\cdot, \tau)  \\ 3 \rho_2 \xi_t(\cdot, \tau) \\-3 \rho_2 \xi(\cdot, \tau)  \\
 \rho_2 s_t(\cdot, \tau)  \\ -\rho_2 s(\cdot, \tau) \\
 -3kw(L, \tau)\\
 -3b\xi(L, \tau)\\
 -bs(L, \tau)\end{array}\right), \left(\begin{array}{c}
 \chi_0^{\tau}\\ \chi_1^{\tau} \\ \eta_0^{\tau} \\ \eta_1^{\tau}\\ \Theta_0^{\tau} \\ \Theta_1^{\tau} \\\chi_t(L, \tau)\\ \eta_t(L, \tau)\\ \Theta_t(L, \tau) \end{array}\right)  \displaystyle \right\rangle=\displaystyle \left\langle  \left(\begin{array}{c}
3 \rho_1 w_1 \\ -3 \rho_1 w_0 \\ 3 \rho_2 \xi_1\\-3 \rho_2 \xi_0 \\
 \rho_2 s_1 \\ -\rho_2 s_0\\
 -3kw(L, 0)\\
 -3b\xi(L, 0)\\
 -bs(L, 0)\end{array}\right), \left(\begin{array}{c}
 \chi(0)\\ \chi_t(0) \\ \eta(0) \\ \eta_t(0)\\ \Theta(0) \\ \Theta_t(0) \\\chi_t(L, 0)\\ \eta_t(L, 0)\\ \Theta_t(L, 0) \end{array}\right)  \displaystyle \right\rangle \\
& +\displaystyle\int_{0}^{\tau} \, \left \langle \left(3h_1(t), 3h_2(t, h_3(t)) \right), \left(\chi(t), \eta(t), \Theta(t) \right)\right\rangle_{\left[\left(  H_*^1(0,L) \right)^{*}, H_*^1(0,L) \right]^3}dxdt \nonumber\\
&+\left((3ku_1(t), 3bu_2(t), bu_3(t)),  1_{(0, \tau)}(\chi(L, t), \eta(L, t), \Theta(L, t) )\right)_{[L^2(0, T)]^3} \nonumber \\
&+ \left( \left(3kw_t(L, 0), 3b\xi_t(L, 0), bs_t(L, 0), \left( \chi(L, 0), \eta(L, 0), \Theta(L, 0) \right)\right)\right)_{\mathbb{R}^3}\nonumber
\end{align}
for any $ \tau \in  [0, T]$ and $ W^{\tau } \in  \mathcal{H}$, with $$W^{\tau }=(\chi_0^{\tau}, \chi_1^{\tau}, \eta_0^{\tau}, \eta_1^{\tau}, \Theta_0^{\tau}, \Theta_1^{\tau}, \chi_1^{\tau}(L), \eta_1^{\tau}(L), \Theta_1^{\tau}(L))^{\top},$$
where $\left\langle \cdot , \cdot\right\rangle$ is the
duality bracket in $\mathcal{H}'\times\mathcal{H}$ and  $W=\left( \chi, \chi_t, \eta, \eta_t, \Theta, \Theta_t,  \chi_t(L, t), \eta_t(L, t), \Theta_t(L, t) \right)^{\top} $ is solution of the following adjoint system
\begin{equation}\label{sistema3}
\left\{\begin{array}{ll} \rho_{1}\chi_{tt}-k\left(  \chi_{x}+\eta + \Theta\right)  _{x}   =0, &x\in(0,L),\,\,\, 
t\in (0, \tau),\\   
\rho_{2}\eta_{tt}-b\eta_{xx}+k\left(  \chi_{x}+\eta + \Theta\right)   
=0,  &x\in(0,L),\,\,\, 
t\in (0, \tau),\\
\rho_{2}\Theta_{tt}-b\Theta_{xx}+3k\left(  \chi_{x}+\eta + \Theta\right)  
+ \gamma \Theta=0, & x\in(0,L),\,\,\, t\in (0, \tau),\\
\chi(0, t)=\eta(0, t)= \Theta(0, t) =0, & t\in (0, \tau)\\
\chi_{tt}(L, t)+\chi_x(L,t)+\eta(L,t)+\Theta(L, t)=0, &
t\in (0, \tau),\\
\eta_{tt}(L, t)+\Theta_{x}(L, t)= 0, &
t\in (0, \tau),\\
\Theta_{tt}(L, t) + \Theta_x(L, t) =0, &
t\in (0, \tau),\\
\left(  \chi, \eta, \Theta \right)  \left( x,\tau \right)  =(\chi_0^{\tau},  \eta_0^{\tau},  \Theta_0^{\tau}), & x\in(0,L),\\
\left( \chi_{t}, \eta_{t}, \Theta_{t}\right)  \left(  x, \tau\right)  =(\chi_1^{\tau},  \eta_1^{\tau},  \Theta_1^{\tau}), & x\in(0,L), \\
\left(  \chi, \eta, \Theta \right)  \left( L,\tau \right)  =(0, 0, 0).
\end{array}\right.
\end{equation}
\end{definition}

The next theorem establishes the existence and uniqueness of solutions for the system \eqref{2bbm} in the transposition sense.

\begin{theorem}
Let $T > 0$, $U_0 \in D\left(\mathcal{A}\right)  $, $(h_1, h_2, h_3) \in  \left(L^2\left(0,T;  \left( H_*^1(0,L) \right)^{*}\right)\right)^3 $
and $ u_i \in  L^2(0,T),$ for  $ i=1,2,3 $. Then, there exists a unique solution 
$$ U=(w,w_t, \xi,\xi_t,s, s_t, w_t(L), \xi_t(L), s_t(L))^{\top} \in
C ([0,T]; D\left(\mathcal{A}\right)),$$of system \eqref{2bbm_nonhomo_2} which verifies \eqref{equa2}.
\end{theorem}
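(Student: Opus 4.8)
The plan is to construct the solution by the transposition (duality) method, exploiting that by Theorem \ref{teogeneinfiA} the operator $\mathcal{A}$ generates a group of isometries, so that the adjoint system \eqref{sistema3} is well posed backward in time. Fix $\tau\in[0,T]$. For terminal data $W^\tau\in\mathcal{H}$, running the group from $t=\tau$ down to $t=0$ produces a unique adjoint solution $W\in C([0,\tau];\mathcal{H})$ with $\|W(t)\|_{\mathcal{H}}=\|W^\tau\|_{\mathcal{H}}$ for every $t$, and $W\in C([0,\tau];D(\mathcal{A}))$ whenever $W^\tau\in D(\mathcal{A})$. The right-hand side of \eqref{equa2}, regarded as a function of $W^\tau$, defines a linear functional $\Lambda_\tau$ on $\mathcal{H}$; the construction then reduces to showing that $\Lambda_\tau$ is bounded and invoking the Riesz representation theorem to produce the weighted vector appearing on the left-hand side of \eqref{equa2}, from which the components of $U(\tau)$ are read off uniquely.

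The central estimate is the boundedness $|\Lambda_\tau(W^\tau)|\le C\bigl(\|U_0\|_{\mathcal{H}}+\sum_i\|h_i\|+\sum_i\|u_i\|_{L^2(0,T)}\bigr)\|W^\tau\|_{\mathcal{H}}$, which I would obtain term by term. The initial-data pairing is handled by Cauchy--Schwarz together with the isometry $\|W(0)\|_{\mathcal{H}}=\|W^\tau\|_{\mathcal{H}}$. For the source term I would use $\|(\chi,\eta,\Theta)(t)\|_{[H_*^1(0,L)]^3}\le C\|W(t)\|_{\mathcal{H}}=C\|W^\tau\|_{\mathcal{H}}$, recovering each $H_*^1$ norm from the energy via the Dirichlet condition at $x=0$ and Poincaré, and then pair against $h_i\in L^2(0,T;(H_*^1(0,L))^{\ast})$ in time. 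The delicate terms are the control and boundary contributions, and here the Venttsel structure is exactly what helps: the boundary velocities $\chi_t(L,\cdot),\eta_t(L,\cdot),\Theta_t(L,\cdot)$ are themselves components of the state and are therefore bounded by $\|W^\tau\|_{\mathcal{H}}$. Combining this with the terminal relation $(\chi,\eta,\Theta)(L,\tau)=(0,0,0)$ from \eqref{sistema3}, the position traces are recovered as $\chi(L,t)=-\int_t^\tau\chi_t(L,\sigma)\,d\sigma$, yielding an $L^\infty(0,\tau)$ (hence $L^2$) bound controlled by $\|W^\tau\|_{\mathcal{H}}$, which is precisely what the pairing with $u_i\in L^2(0,T)$ requires.

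With $\Lambda_\tau$ bounded, Riesz yields a unique $U(\tau)\in\mathcal{H}$ satisfying \eqref{equa2}; uniqueness is then immediate, since any two transposition solutions differ by a vector whose weighted pairing against every $W^\tau\in\mathcal{H}$ vanishes. Continuity of $\tau\mapsto U(\tau)$ into $\mathcal{H}$ follows from the continuity of the adjoint flow in $\tau$ and the absolute continuity of the time integrals defining $\Lambda_\tau$. The hard part will be upgrading the regularity to $U\in C([0,T];D(\mathcal{A}))$, as demanded by the statement: I would first verify \eqref{equa2} directly for smooth data $u_i\in C_0^\infty(0,T)$ and smooth $h_i$ by multiplying the equations of \eqref{2bbm_nonhomo_2} by the weighted adjoint unknowns, integrating over $(0,L)\times(0,\tau)$, and integrating by parts in $x$ and $t$, so that the classical solution furnished by the preceding well-posedness theorem (through the cut-off lifting for the $u_i$ and Duhamel's formula for the $h_i$) coincides with the transposition solution; the general case then follows by density of the smooth data together with the continuous dependence established above. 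The main obstacle is ensuring that this limit retains membership in $D(\mathcal{A})$ rather than only in $\mathcal{H}$, which rests on the additional time-regularity that the dynamic Venttsel boundary conditions confer on the boundary degrees of freedom.
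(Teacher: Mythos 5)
Your proposal follows essentially the same route as the paper: solve the adjoint system backward via the isometry group, define the linear functional given by the right-hand side of \eqref{equa2}, prove its boundedness by Cauchy--Schwarz and the energy identity, and conclude by the Riesz representation theorem, with uniqueness and time-continuity handled afterward. The only differences are ones of detail — you make explicit the estimate of the boundary traces $\chi(L,t),\eta(L,t),\Theta(L,t)$ and sketch a smooth-data density argument for the $C([0,T];D(\mathcal{A}))$ regularity, whereas the paper compresses the first into its Cauchy--Schwarz step and delegates the second to a citation — so the two proofs coincide in substance.
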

\begin{proof}
Let $ T  >  0 $ and $ \tau \in [0,T]$. By semigroup theory, precisely, from Theorem \ref{ex-hom}, the unique solution $W$ of system  \eqref{sistema3} is given by 
\begin{align*}
W(t)=S(\tau-t)W^{\tau},
\end{align*}
where $S(\cdot)$ is the semigroup generate by $\mathcal{A}$. Moreover, there exists $C_T  >  0$, such that
\begin{align}\label{deestiWparariesz}
\Vert W(t) \Vert_{\mathcal{H}} \leq C_T\Vert W^{\tau} \Vert_{\mathcal{H}}, \ \ \forall t \in [0, \tau].
\end{align}

Let us define a functional $\Delta$ given by the right-hand side of \eqref{equa2}, that is
\begin{align*}
\Delta\left(W^{\tau} \right) =& \left\langle V_0^1, W(0)\right\rangle_{ \mathcal{H}'\times\mathcal{H}}  \\&+\displaystyle\int_{0}^{\tau} \, \left \langle \left(3h_1(t), 3h_2(t, h_3(t)) \right), \left(\chi(t), \eta(t), \Theta(t) \right)\right\rangle_{\left[\left(  H_*^1(0,L) \right)^{*}, H_*^1(0,L) \right]^3}dxdt \\
&+\left((3ku_1(t), 3bu_2(t), bu_3(t)),  1_{(0, \tau)}(\chi(L, t), \eta(L, t), \Theta(L, t) )\right)_{[L^2(0, T)]^3}  \\
&+ \left( \left(3kw_t(L, 0), 3b\xi_t(L, 0), bs_t(L, 0)\right), \left( \chi(L, 0), \eta(L, 0), \Theta(L, 0) \right)\right)_{\mathbb{R}^3},
\end{align*}
where
\begin{align*}
V_0^1= ( 3 \rho_1 w_1,-3 \rho_1 w_0, 3 \rho_2 \xi_1,-3 \rho_2 \xi_0,
 \rho_2 s_1, -\rho_2 s_0,
 -3kw(L, 0),
 -3b\xi(L, 0),
 -bs(L, 0))^{\top}.
\end{align*}
Note that $\Delta$ is linear. Moreover, from \eqref{deestiWparariesz} and the Cauchy-Schwarz inequality, it follows that
\begin{equation*}
\begin{split}
 |\Delta\left(W^{\tau}\right)| \leq&\Vert V^1_0 \Vert_{\mathcal{H}} \Vert W(0) \Vert_{\mathcal{H}} + C_T \Vert W(t) \Vert_{\mathcal{H}} \Vert(u_1, u_2, u_3) \Vert_{[L^2(0,T)]^3}\\
 & + C_T \Vert W(t) \Vert_{C\left([0, \tau]; \mathcal{H}\right)} \Vert(h_1, h_2, h_3) \Vert_{ \left(L^2\left(0,T;  \left( H_*^1(0,L) \right)^{*}\right)\right)^3}\\
 & + C_T \Vert W(t) \Vert_{C\left([0, \tau]; \mathcal{H}\right)} \Vert\left(w_t(L, 0), \xi_t(L, 0), s_t(L, 0)\right) \Vert_{ \mathbb{R}^3}\\
  \leq &C_T \left( \Vert V^1_0 \Vert_{\mathcal{H}} + \Vert(u_1, u_2, u_3) \Vert_{[L^2(0,T)]^3} + \Vert(h_1, h_2, h_3) \Vert_{ \left(L^2\left(0,T;  \left( H_*^1(0,L) \right)^{*}\right)\right)^3} \right. \\
 & \left. + \Vert\left(w_t(L, 0), \xi_t(L, 0), s_t(L, 0)\right) \Vert_{ \mathbb{R}^3}    \right) \Vert W^{\tau} \Vert_{\mathcal{H}}.
\end{split}
\end{equation*}Hence,  we obtain that $\Delta \in \mathcal{L}(D\left(\mathcal{A}\right);\mathbb{R}). $ Thus, from the Riesz representation theorem, we obtain the existence and uniqueness of $U \in D\left(\mathcal{A}\right) $  satisfying \eqref{equa2}. Finally, the fact that $U \in C\left([0, T] ; D\left(\mathcal{A}\right) \right)$ is analogous to performed in \cite[Lemma 3.2.]{CF2019}, so we omit the details.
\end{proof} 

\section{Controllability properties} \label{sec:3}

In this section, we investigate the boundary controllability properties of system \eqref{2bbm}. As mentioned in the introduction, the controllability problem is closely related to establishing an observability inequality. Accordingly, we divide this section into two parts. First, we provide a characterization of the controllability of the system under consideration and present a corresponding observability inequality for the adjoint system. Finally, using this inequality, we prove the main result of the article.

\subsection{Observability inequality}

We begin with the following characterization of a control that drives system \eqref{2bbm} to zero. Such results are well-established for dispersive systems\footnote{See, for example, \cite{BAPA,M}.} and in a general context, for details see \cite{Komornik}. In our context, the following lemma gives an equivalent condition for the exact controllability property.

\begin{lemma}
The initial data  $U_0 \in \mathcal{H}'$
is controllable to zero in time $T > 0$ with controls $ u_i \in  L^2(0,T),$ for $i=1,2,3  $, if and only if
\begin{equation}\label{equiv}
\begin{split}
 &- \left\langle  \left(\begin{array}{c}
3 \rho_1 w_1 \\ -3 \rho_1 w_0 \\ 3 \rho_2 \xi_1\\-3 \rho_2 \xi_0 \\
 \rho_2 s_1 \\ -\rho_2 s_0\\
 -3kw(L, 0)\\
 -3b\xi(L, 0)\\
 -bs(L, 0)\end{array}\right), \left(\begin{array}{c}
 \chi(0)\\ \chi_t(0) \\ \eta(0) \\ \eta_t(0)\\ \Theta(0) \\ \Theta_t(0) \\\chi_t(L, 0)\\ \eta_t(L, 0)\\ \Theta_t(L, 0) \end{array}\right)  \right\rangle \\
&-\left( \left(3kw_t(L, 0), 3b\xi_t(L, 0) bs_t(L, 0), \left( \chi(L, 0), \eta(L, 0), \Theta(L, 0) \right)\right)\right)_{\mathbb{R}^3}  \\
& =  3k\int^T_0 u_1(t) \chi(L, t)dt + 3b\int^T_0 u_2(t)\eta(L, t) dt+ b\int^T_0 u_3(t)\Theta(L, t) dt.
\end{split}
\end{equation}
Here, $\left\langle \cdot , \cdot\right\rangle$ is the duality of $\mathcal{H}'\times\mathcal{H}$ and $(\chi,\eta,\Theta)$ is  any solution of the following adjoint system
\begin{equation}\label{back}
\left\{\begin{array}{ll} \rho_{1}\chi_{tt}-k\left(  \chi_{x}+\eta + \Theta\right)  _{x}   =0, &x\in(0,L),\,\,\, 
t>0,\\   
\rho_{2}\eta_{tt}-b\eta_{xx}+k\left(  \chi_{x}+\eta + \Theta\right)   
=0,  &x\in(0,L),\,\,\, 
t>0,\\
\rho_{2}\Theta_{tt}-b\Theta_{xx}+3k\left(  \chi_{x}+\eta + \Theta\right)  
+\gamma \Theta=0, & x\in(0,L),\,\,\, t>0,\\
\chi(0, t)=\eta(0, t)= \Theta(0, t) =0, & t>0,\\
\chi_{tt}(L, t)+\chi_x(L,t)+\eta(L,t)+\Theta(L, t)=0, &
t>0,\\
\eta_{tt}(L, t)+\Theta_{x}(L, t)= 0, &
t>0,\\
\Theta_{tt}(L, t) + \Theta_x(L, t) =0, &
t>0,\\
\left(  \chi, \eta, \Theta \right)  \left( x,T \right)  =\left(\chi^{T}_0, \eta^T_0, \Theta_0^T\right)(x), & x\in(0,L),\\
\left( \chi_{t}, \eta_{t}, \Theta_{t}\right)  \left(  x,T\right)  =\left(\chi^{T}_1, \eta^T_1, \Theta_1^T\right)(x), & x\in(0,L),\end{array}\right.
\end{equation}
with $(\chi_0^{\tau}, \chi_1^{\tau}, \eta_0^{\tau}, \eta_1^{\tau}, \Theta_0^{\tau}, \Theta_1^{\tau}, \chi_1^{\tau}(L), \eta_1^{\tau}(L), \Theta_1^{\tau}(L))^{\top}  \in \mathcal{H}$.
\end{lemma}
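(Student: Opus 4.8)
The plan is to obtain the equivalence directly from the transposition formulation \eqref{equa2}, specialized to the control problem for \eqref{2bbm} --- so that the source terms vanish, $h_1 = h_2 = h_3 = 0$ --- and evaluated at the terminal time $\tau = T$. The key observation is that, for $\tau = T$, the adjoint system \eqref{sistema3} coincides with \eqref{back}; hence the test function $W = (\chi, \chi_t, \eta, \eta_t, \Theta, \Theta_t, \chi_t(L,t), \eta_t(L,t), \Theta_t(L,t))^{\top}$ appearing in \eqref{equa2} is precisely a solution of \eqref{back} with arbitrary terminal data $W^T = (\chi_0^T, \chi_1^T, \eta_0^T, \eta_1^T, \Theta_0^T, \Theta_1^T, \chi_1^T(L), \eta_1^T(L), \Theta_1^T(L))^{\top} \in \mathcal{H}$.

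First I would rewrite the left-hand side of \eqref{equa2} at $\tau = T$. This term is exactly the $\mathcal{H}' \times \mathcal{H}$ duality pairing between the state of $U$ at time $T$ --- namely $(w, w_t, \xi, \xi_t, s, s_t)(\cdot, T)$ together with the boundary traces $(w(L,T), \xi(L,T), s(L,T))$ --- and the terminal adjoint data $W^T$. Since the weights $3\rho_1, 3\rho_2, \rho_2, 3k, 3b, b$ are positive constants, the null-controllability requirement that the full terminal state $(w, w_t, \xi, \xi_t, s, s_t)(\cdot, T)$ and the boundary velocities $(w_t(L,T), \xi_t(L,T), s_t(L,T))$ all vanish is equivalent to the vanishing of this left-hand side for every choice of terminal data $W^T \in \mathcal{H}$.

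Next, setting $h_i = 0$ and imposing that the left-hand side vanishes, the identity \eqref{equa2} reduces to a relation among the initial-data pairing (the first duality bracket on the right-hand side of \eqref{equa2}), the control integrals against the boundary traces $\chi(L, \cdot)$, $\eta(L, \cdot)$, $\Theta(L, \cdot)$, and the $\mathbb{R}^3$ inner products carrying the initial boundary velocities. Transposing the initial-data and boundary terms to the left while keeping the control integrals on the right reproduces exactly \eqref{equiv}. For the converse I would read the same chain of equivalences backwards: if \eqref{equiv} holds for every solution of \eqref{back}, then the right-hand side of \eqref{equa2} at $\tau = T$ vanishes for all $W^T$, which forces the terminal-state pairing to vanish for all $W^T$ and hence drives $U(T)$ to zero, so that $U_0$ is controllable to zero.

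The routine but error-prone part of the argument is the bookkeeping: one must track the coefficients $3k$, $3b$, $b$ and the alternating signs across the six evolution components and the three boundary-trace components, checking that each term of \eqref{equa2} at $\tau = T$ matches its counterpart in \eqref{equiv}. The only genuinely structural point requiring care is the non-degeneracy step --- that testing against all terminal data detects the full state of $U$ at time $T$. Here the group structure from Theorem \ref{teogeneinfiA} enters: it guarantees that \eqref{back} is well-posed backward for every $W^T \in \mathcal{H}$, so that the test functions genuinely sweep out all of $\mathcal{H}$, and the non-degeneracy of the $\mathcal{H}' \times \mathcal{H}$ duality pairing then forces the weighted terminal state, and therefore $U(T)$ itself, to vanish.
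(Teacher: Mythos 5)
Your proposal is correct, but it reaches \eqref{equiv} by a route that is organized differently from the paper's. The paper's proof is a single direct multiplier computation: multiply $\eqref{2bbm}_1$, $\eqref{2bbm}_2$, $\eqref{2bbm}_3$ by $3\chi$, $3\eta$, $\Theta$ respectively, integrate by parts over $(0,L)\times(0,T)$, and use the boundary conditions of \eqref{2bbm}; the identity \eqref{equiv} is then exactly the boundary bookkeeping of that computation. You instead invoke the transposition identity \eqref{equa2} with $h_1=h_2=h_3=0$ and $\tau=T$, i.e.\ you reuse the integration by parts already packaged in the definition of solution by transposition, and then observe that null controllability is equivalent to the vanishing of the left-hand pairing for every terminal datum $W^T$. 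This buys two things: you avoid redoing the error-prone weighted computation with the coefficients $3k$, $3b$, $b$, and, more substantively, you make explicit the non-degeneracy step needed for the ``only if'' direction (testing against all $W^T\in\mathcal{H}$ detects the full weighted terminal state), a step the paper's one-sentence proof leaves entirely implicit since it only sketches the derivation of the identity. One point you should flag rather than assert: \eqref{sistema3} at $\tau=T$ does not literally ``coincide with'' \eqref{back}, because \eqref{sistema3} carries the additional terminal trace condition $(\chi,\eta,\Theta)(L,\tau)=(0,0,0)$, which \eqref{back} does not impose, so on its face the test class in \eqref{equa2} is only a (non-dense, since the trace at $x=L$ is continuous on $H^1_*(0,L)$) subclass of the solutions of \eqref{back}. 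This is an internal inconsistency of the paper itself—the transposition theorem quantifies over \emph{all} $W^\tau\in\mathcal{H}$, which contradicts that extra line—and under that reading your argument goes through verbatim; but as written, your forward implication only yields \eqref{equiv} for adjoint solutions with vanishing terminal position trace at $x=L$, so you should either adopt the ``all of $\mathcal{H}$'' reading explicitly or add a limiting argument to recover the identity for every solution of \eqref{back}.
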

\begin{proof}The relation \eqref{equiv} is obtained first, by multiplying the equations $\eqref{2bbm}_1$, $\eqref{2bbm}_2$, and $\eqref{2bbm}_3$ by $3\chi$, $3\eta$ and $\Phi$, respectively, where $(\chi,\eta,\phi)$ is solution of \eqref{back}. So, integrating by parts and using the boundary conditions of \eqref{2bbm}, the relation \eqref{equiv} follows.
\end{proof}

Since we are using the control duality theory of Dolecki and Russell \cite{DoRu} in the set-up of Lions \cite{Lions1988SIAM}, relation \eqref{equiv} may be seen as an optimality condition for the critical points of the functional
$\mathcal{J}: \mathcal{H}\to \mathbb{R}$, given by 
$$\mathcal{J}(W_0)=\frac{1}{2} \int^T_0 \left( \left|\chi(L, t)\right|^2dt +  \left|\eta(L, t)\right|^2 + \left|\Theta(L, t)\right|^2 \right)dt+\left\langle W_0,U_0\right\rangle_{\mathcal{H}\times\mathcal{H}'},$$
where $W=\left( \chi, \chi_t, \eta, \eta_t, \Theta, \Theta_t,  \chi_t(L, t), \eta_t(L, t), \Theta_t(L, t) \right)^{\top} $ is solution of the adjoint system \eqref{back}. 

With this in hand, our task is to prove the existence of a minimizer for $\mathcal{J}$. It is well known that the existence of a minimizer to the functional $\mathcal{J}$ follows from the following observability inequality
\begin{equation}\label{DESIIMPORTANTE}
 \Vert W(0) \Vert^2_{\mathcal{H}} \leq C \int^T_0 \left( \left|\chi(L, t)\right|^2dt +  \left|\eta(L, t)\right|^2 + \left|\Theta(L, t)\right|^2 \right)dt,  
\end{equation}
for any  $$(\chi_0^{\tau}, \chi_1^{\tau}, \eta_0^{\tau}, \eta_1^{\tau}, \Theta_0^{\tau}, \Theta_1^{\tau}, \chi_1^{\tau}(L), \eta_1^{\tau}(L), \Theta_1^{\tau}(L))^{\top}  \in D(\mathcal{A}^*).$$ Here, $W=\left( \chi, \chi_t, \eta, \eta_t, \Theta, \Theta_t,  \chi_t(L, t), \eta_t(L, t), \Theta_t(L, t) \right)^{\top} $ is solution of the adjoint system \eqref{back}. Now, our efforts are to prove that this inequality holds. Before proving the inequality \eqref{DESIIMPORTANTE}, let us prove intermediary inequalities related to the traces of the solutions of \eqref{abs}. The first one can be read as follows.

\begin{proposition}
Let $U=(w,w_t, \xi,\xi_t, s,s_t, w_t(L,t), \xi_t(L,t), s_t(L,t))^{\top}$ the solution of the system \eqref{abs}. Then, for any $T>0$ and $s \in (\frac{1}{2}, 1)$, there exist a positive constant $C$ such that the following estimate hold
\begin{equation}\label{primerdesiobserder}
\begin{split}
\Vert U(t) \Vert^2_{\mathcal{H}} \leq &C \int^T_0 \left( \left|w_t(L, t)\right|^2dt +  \left|\xi_t(L, t)\right|^2 + \left|s_t(L, t)\right|^2 \right)dt  \\
&+C\Vert (w, \xi, s) \Vert^2_{L^{\infty}\left(0, T; [H^{s}(0, L)]^3\right)}.
\end{split}
\end{equation}
\end{proposition}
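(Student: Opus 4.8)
The plan is to couple the conservation of energy with the momentum multiplier method, reducing the desired bound to the control of a handful of boundary traces that the Venttsel conditions relate to the observed velocities. First I would invoke the isometry \eqref{identi_gisometr} of Theorem \ref{ex-hom}: since $\|U(t)\|_{\mathcal{H}}$ is constant in $t$, bounding $\|U(t)\|_{\mathcal{H}}^2$ is equivalent to bounding the time average $\frac{1}{T}\int_0^T\|U(t)\|_{\mathcal{H}}^2\,dt$. I would then split $\|U\|_{\mathcal{H}}^2$ into its bulk part and its boundary kinetic part $3k|w_t(L)|^2+3b|\xi_t(L)|^2+b|s_t(L)|^2$; the latter is dominated directly by the observed integral after averaging in time, so the real task is to estimate the bulk part, which is precisely what the multiplier identity produces.

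Next I would multiply the three interior equations of \eqref{2bbm-hom} by $3xw_x$, $3x\xi_x$ and $xs_x$ respectively, the weights $3,3,1$ being chosen to reproduce the coefficients in the inner product of $\mathcal{H}$, and integrate over $(0,L)\times(0,T)$, integrating by parts in both $x$ and $t$. Since $q(x)=x$ vanishes at $x=0$ where $w,\xi,s$ are Dirichlet, all contributions at the left endpoint disappear. Writing $\phi:=w_x+\xi+s$, the three shear terms combine into $\frac{3k}{2}\|\phi\|^2$ plus a coupling remainder $\int_0^T\int_0^L \phi(\xi+s)\,dx\,dt$, and the term $\gamma s\cdot xs_x$ produces $-\frac{\gamma}{2}\int_0^T\|s\|^2\,dt$ together with a boundary trace; these remainders are of lower order, while the remaining interior terms reassemble one half of the bulk energy. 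At $x=L$ the time integrations yield exactly the observed velocities $\int_0^T(|w_t(L)|^2+|\xi_t(L)|^2+|s_t(L)|^2)\,dt$, and the spatial integrations yield the stress traces $w_x(L)$, $\xi_x(L)$, $s_x(L)$ and $\phi(L)$.

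The next step is to dispose of these stress traces using the dynamic boundary conditions, which read $\phi(L)=-w_{tt}(L)$, $\xi_x(L)=-\xi_{tt}(L)$ and $s_x(L)=-s_{tt}(L)$. The mixed contributions, namely a stress against a zeroth-order trace such as $\int_0^T(\xi(L)+s(L))w_{tt}(L)\,dt$, I would integrate by parts in time, turning them into $-\int_0^T(\xi_t(L)+s_t(L))w_t(L)\,dt$ plus endpoint terms; these become observed products together with quantities controlled by the conserved energy and by the traces $\xi(L),s(L)$. Every genuinely lower-order quantity is then absorbed into $C\|(w,\xi,s)\|_{L^\infty(0,T;[H^s]^3)}^2$: the boundary traces $w(L),\xi(L),s(L)$ through the embedding $H^s(0,L)\hookrightarrow C[0,L]$ valid for $s>\frac12$; the coupling remainder and the term $\int_0^T\|s\|^2\,dt$ through the inclusion $L^2\subset H^s$; and the endpoint terms $[\,\cdot\,]_0^T$ by taking $T$ large enough to absorb their energy content.

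The hardest part is the pure acceleration traces $\int_0^T\bigl(3k|w_{tt}(L)|^2+3b|\xi_{tt}(L)|^2+b|s_{tt}(L)|^2\bigr)\,dt$ that survive the substitutions: they appear on the right-hand side with a positive sign and hence cannot simply be discarded. A direct integration by parts in time is circular, and indeed for a single mode of frequency $\omega$ one has $\int_0^T|\xi_{tt}(L)|^2\,dt\sim \omega^2\int_0^T|\xi_t(L)|^2\,dt$, so these traces cannot be dominated by the observation alone. I expect to control them by exploiting the Venttsel boundary ODEs in tandem with the second intermediate estimate \eqref{seconddesiobserder2}, which is tailored to bound the boundary derivative traces by the observed velocities up to a lower-order remainder; feeding that bound into the momentum identity and absorbing the remainder into the $H^s$ norm yields \eqref{primerdesiobserder}. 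This interplay between the momentum identity and the boundary-ODE trace control is the delicate core of the argument.
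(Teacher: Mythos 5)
Your opening and your multiplier step coincide with the paper's proof: the paper likewise combines the conservation identity \eqref{identi_gisometr} with the multipliers $3xw_x$, $3x\xi_x$, $xs_x$, arriving at the identity \eqref{total_suma_some}, in which the velocity traces at $x=L$, the stress traces $w_x(L)$, $\xi_x(L)$, $s_x(L)$, the time-endpoint terms and the coupling term appear exactly as you describe. The genuine gap is in your treatment of the stress traces. You substitute the Venttsel conditions $\xi_x(L)=-\xi_{tt}(L)$, $s_x(L)=-s_{tt}(L)$, $w_x(L)+\xi(L)+s(L)=-w_{tt}(L)$, which produces the positive acceleration terms $\int_0^T\bigl(3b|\xi_{tt}(L,t)|^2+b|s_{tt}(L,t)|^2\bigr)\,dt$ (plus mixed ones), and, as your own single-mode computation shows, these cannot be dominated by the observed velocity traces. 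Your proposed escape, invoking \eqref{seconddesiobserder2}, is circular in the paper's logical architecture: Theorem \ref{teoimportantepaperI} is proved by a compactness--uniqueness argument whose key compactness step is precisely the estimate \eqref{primerdesiobserder} you are trying to establish. Moreover, \eqref{seconddesiobserder2} does not say what you attribute to it: it bounds the initial energy $\Vert U_0\Vert^2_{\mathcal{H}}$ by the velocity traces; it provides no control of acceleration (or stress) traces by velocity traces, and by your own frequency heuristic no such control can hold.

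What the paper does instead is never to substitute the dynamic boundary conditions. The stress-trace block is kept as it stands and estimated directly in \eqref{EQ2P}: the paper asserts, via Young's inequality and the Sobolev embedding valid for $s\in(\tfrac12,1)$, that the whole boundary block is bounded by $\varepsilon_1$ times the instantaneous energy plus $C_{\varepsilon_1,k,b,L}\Vert (w,\xi,s)(\cdot,t)\Vert^2_{[H^s(0,L)]^3}$, which is exactly the form that can be absorbed on the left and discarded into the $L^\infty(0,T;[H^s]^3)$ remainder; the coupling term \eqref{EQ1P} and the endpoint terms \eqref{EQ3P} are handled the same way, and accelerations never appear. (Whether \eqref{EQ2P} itself — a bound on derivative traces by interior norms plus lower-order terms — is fully justified is a separate question about the paper, but it is the route the paper takes.) To repair your argument you would either have to estimate $w_x(L)$, $\xi_x(L)$, $s_x(L)$ without converting them into time derivatives, as the paper does, or supply an independent, non-circular mechanism for $\int_0^T|\xi_{tt}(L,t)|^2\,dt$ and its companions; as written, the step you yourself call the delicate core is an acknowledged hope rather than a proof.
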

\begin{proof}
In order to obtain estimate \eqref{primerdesiobserder}, we multiply the first equation in \eqref{2bbm-hom} by $3x w_x$ and integrate by parts on $(0, L)\times (0, T) $ to obtain
\begin{equation}\label{ecua1_fordesiguaob1}
\begin{split}
 0 =& \int^L_0 \int^T_0 \left( \rho_{1}w_{tt}-k\left(  w_{x}+\xi+s\right)_{x} \right)3x w_x dt dx \\
  = &3 \rho_1 \int^L_0\left[ x w_tw_x \right]_0^Tdx -3k \int^T_0 L \left(  w_{x}(L, t)+\xi(L, t)+s(L, t)\right)w_{x}(L, t)dt  \\
 & +3k \int^L_0 \int^T_0 \left(  w_{x}+\xi+s\right)x w_{xx}dt dx + 3k \int^L_0 \int^T_0\left(  w_{x}+\xi+s\right)^2 dt dx  \\
 & -3k \int^L_0 \int^T_0\left(  w_{x}+\xi+s\right)\left( \xi+s\right) dt dx -3\rho_1 \int^L_0 \int^T_0 x w_t w_{xt} dt dx. 
 \end{split}
\end{equation}
Since 
\begin{align*}
\int^L_0 \int^T_0 x w_t w_{xt} dt dx = \frac{L}{2} \int^T_0w^2_t(L, t)dt  - \frac{1}{2} \int^L_0 \int^T_0w^2_tdt dx,
\end{align*}
by substituting  the above identity in \eqref{ecua1_fordesiguaob1}, we get
\begin{equation}\label{ecua1_1_fordesiguaob1}
\begin{split}
 &3 \rho_1 \int^L_0\left[ x w_tw_x \right]_0^Tdx - \frac{3 \rho_1 L}{2} \int^T_0w^2_t(L, t)dt + \frac{3 \rho_1}{2} \int^L_0 \int^T_0w^2_tdt dx \\
& -3k \int^T_0 L \left(  w_{x}(L, t)+\xi(L, t)+s(L, t)\right)w_{x}(L, t)dt +3k \int^L_0 \int^T_0 \left(  w_{x}+\xi+s\right)x w_{xx}dt dx   \\
& + 3k \int^L_0 \int^T_0\left(  w_{x}+\xi+s\right)^2 dt dx -3k \int^L_0 \int^T_0\left(  w_{x}+\xi+s\right)\left( \xi+s\right) dt dx=0.
\end{split}
\end{equation}
Now, we multiply  the second equation in \eqref{2bbm-hom} by $3x \xi_x$, the third
one by $x s_x$, integrate by parts on $(0, L)\times (0, T) $ and we have that
\begin{equation}\label{ecua2_fordesiguaob1}
\begin{split}
 0 =& \int^L_0 \int^T_0 \left(  \rho_{2}\xi_{tt}-b\xi_{xx}+k\left(  w_{x}+\xi+s\right)\right)3x \xi_x dt dx \\
  = &3 \rho_2 \int^L_0\left[ x \xi_t \xi_x \right]_0^Tdx -\frac{3 \rho_2 L}{2}  \int^T_0 \xi^2_{t}(L, t)dt + \frac{3 \rho_2 }{2}  \int^L_0\int^T_0 \xi^2_{t}dt dx \\
 & +3k \int^L_0 \int^T_0 \left(  w_{x}+\xi+s\right)x \xi_{x}dt dx - \frac{3 b L}{2}  \int^T_0 \xi^2_{x}(L, t)dt +  \frac{3 b }{2}  \int^L_0\int^T_0 \xi^2_{x}dt dx,
\end{split}
\end{equation}
and 
\begin{equation}\label{ecua3_fordesiguaob1}
\begin{split}
 0 = &\int^L_0 \int^T_0 \left(  \rho_{2}s_{tt}-bs_{xx}+3k\left(  w_{x}+\xi+s\right) \right)x s_x dt dx \\
 = & \rho_2 \int^L_0\left[ x s_t s_x \right]_0^Tdx -\frac{ \rho_2 L}{2}  \int^T_0 s^2_{t}(L, t)dt + \frac{\rho_2 }{2}  \int^L_0\int^T_0 s^2_{t}dt dx \\
 & +3k \int^L_0 \int^T_0 \left(  w_{x}+\xi+s\right)x s_{x}dt dx - \frac{ b L}{2}  \int^T_0 s^2_{x}(L, t)dt + \frac{ \gamma L}{2}  \int^T_0 s^2(L, t)dt\\
 &+  \frac{ b }{2}  \int^L_0\int^T_0 s^2_{x}dt dx -  \frac{ \gamma L}{2}  \int^L_0\int^T_0 s^2dt dx,
 \end{split}
\end{equation}
respectively.

A  straightforward computation shows that
\begin{equation}\label{I_0pri}
\begin{split}
 &3k \int^L_0 \int^T_0 \left(  w_{x}+\xi+s\right)\left(  xw_{xx}+x\xi_x+xs_x\right)dt dx \\
 &= \frac{3kL}{2} \int^T_0 \left(  w_{x}(L, t)+\xi(L, t)+s(L, t)\right)^2dt -\frac{3k}{2}\int^L_0 \int^T_0 \left(  w_{x}+\xi+s\right)^2 dt dx. 
 \end{split}
\end{equation}
Thus, adding the identities \eqref{ecua1_1_fordesiguaob1}-\eqref{ecua3_fordesiguaob1} and by taking into account \eqref{I_0pri}, it follows that
\begin{equation}\label{total_suma_some}
\begin{split}
&\frac{1}{2} \int^T_0 \Vert U(t) \Vert^2_{\mathcal{H}}dt \\
&= \frac{1}{2}\int^T_0 \left( (3 \rho_1L+3k)w^2_{t}(L, t) +(3 \rho_2L+3b)\xi^2_{t}(L, t)+ (\rho_2L+b)s^2_{t}(L, t)\right)dt  \\
&- \int^L_0\left[3 \rho_1 x w_t w_x +3 \rho_2 x \xi_t \xi_x +  \rho_2 x s_t s_x \right]_0^Tdx + \frac{ b L}{2}  \int^T_0 \left( 3\xi^2_{x}(L, t)+s^2_{x}(L, t)\right)dt  \\
& + 3k \int^L_0 \int^T_0\left(  w_{x}+\xi+s\right)\left( \xi+s\right) dt dx +\frac{3kL}{2}\int^T_0  \left(  w_{x}(L, t)+\xi(L, t)+s(L, t)\right)w_{x}(L, t)dt  \\
& -\frac{3kL}{2}\int^T_0  \left(  w_{x}(L, t)+\xi(L, t)+s(L, t)\right)\left( \xi(L, t)+s(L, t)\right)dt\\
&- \frac{ \gamma L}{2}  \int^T_0 s^2(L, t)dt + \gamma L\int^L_0\int^T_0 s^2dt dx.
\end{split}
\end{equation}

We need to analyse each term of the RHS of \eqref{total_suma_some}. First, applying the Young inequality,  we have that
\begin{equation}\label{EQ1P}
\begin{split}
 3k \int^L_0 \left(  w_{x}+\xi+s\right)\left( \xi+s\right)  dx \leq  &\varepsilon \left( 3 k\left\|(w_x+\xi+s)(\cdot, t)\right\|^2 + 3 b\left\|\xi_x(\cdot, t) \right\|^2 + b\left\|s_x(\cdot, t) \right\|^2\right) \\
 &+ C_{\varepsilon,k} \left\|(\xi, s)(\cdot, t) \right\|_{[H^s(0, L)]^2}^2 ,  
 \end{split}
\end{equation}
for any  $s \geq 0$ and  $\varepsilon>0$. On the other hand, using again the Young inequality and  from the Sobolev embedding\footnote{See, for instance, \cite{DPV,LiMa}.}, we obtain a  positive constant $C_{\varepsilon_1,k, b, L}$, such that 
\begin{equation}\label{EQ2P}
\begin{split}
 \frac{1}{2}&\left \vert\left( bL\left( 3\xi^2_{x}(L, t)+s^2_{x}(L, t)\right) + 3kL  \left(  w_{x}(L, t)+\xi(L, t)+s(L, t)\right)w_{x}(L, t) \right. \right. \\
&\left.\left.-3kL  \left(  w_{x}(L, t)+\xi(L, t)+s(L, t)\right)\left(  \xi(L, t)+s(L, t)\right) \right) \right \vert \\
\leq&  \varepsilon_1 \left( 3 k\left\|(w_x+\xi+s)(\cdot, t)\right\|^2 + 3 b\left\|\xi_x(\cdot, t) \right\|^2 + b\left\|s_x(\cdot, t) \right\|^2\right) \\
&+ C_{\varepsilon_1,k, b, L}\left\| (w, \xi, s)(\cdot, t) \right\|_{[H^s(0, L)]^3}^2,
 \end{split}
\end{equation}
for any $\varepsilon_1>0$ and for $s \in (\frac{1}{2}, 1)$. Now, for all $t \in [0, T]$ and applying once more Young's inequality,  we get that
\begin{equation}\label{EQ3P}
\begin{split}
\int^L_0\left \vert3 \rho_1 x w_t w_x +3 \rho_2 x \xi_t \xi_x +  \rho_2 x s_t s_x\right\vert &dx  \leq \varepsilon_2 \left( 3\rho_1 \Vert w_t(\cdot, t) \Vert^2 + 3\rho_2 \Vert \xi_t(\cdot, t) \Vert^2 \right.\\&\left.+ \rho_2 \Vert s_t(\cdot, t) \Vert^2 \right)+ C_{\varepsilon_2,\rho_1, \rho_2, L}\left\| (w, \xi, s)(\cdot, t) \right\|_{[H^s(0, L)]^3}^2, 
 \end{split}
\end{equation}
for any $\varepsilon_2>0$, $s \in (\frac{1}{2}, 1)$ and some constant $C_{\varepsilon_2,\rho_1, \rho_2, L}>0$. 

With these estimates in hand, let us replace the inequalities \eqref{EQ1P}-\eqref{EQ3P} into \eqref{total_suma_some} and pick $\varepsilon=\varepsilon_1=\varepsilon_2$, it follows that
\begin{equation*}
\begin{split}
\frac{1}{2} \int^T_0 \Vert U(t) \Vert^2_{\mathcal{H}}dt \leq& 2\varepsilon \int^T_0 \Vert U(t) \Vert^2_{\mathcal{H}}dt + C\int^T_0 \left( w^2_{t}(L, t) +\xi^2_{t}(L, t)+ s^2_{t}(L, t)\right)dt\\
& +C_{\varepsilon}\int^T_0 \left\| (w, \xi, s)(\cdot, t) \right\|_{[H^s(0, L)]^3}^2dt.
 \end{split}
\end{equation*}
From estimative above and we can choose $\varepsilon$ small enough such that $0<\varepsilon<\frac{1}{4}$ to deduce
\begin{equation}\label{prefinalesti_impor0}
\begin{split}
\int^T_0 \Vert U(t) \Vert^2_{\mathcal{H}}dt \leq &C\int^T_0 \left( w^2_{t}(L, t) +\xi^2_{t}(L, t)+ s^2_{t}(L, t)\right)dt  \\
&+C\Vert (w, \xi, s) \Vert^2_{L^{\infty}\left(0, T; [H^{s}(0, L)]^3\right)}, 
 \end{split}
\end{equation}
for any $s \in (\frac{1}{2}, 1)$. Finally, using \eqref{identi_gisometr} and the inequality \eqref{prefinalesti_impor0}, we infer the estimative \eqref{primerdesiobserder}.  This finishes the proof.
\end{proof}

Let us now establish the second auxiliary inequality of this section.

\begin{theorem}\label{teoimportantepaperI}
Let $U=(w,w_t, \xi,\xi_t, s,s_t, w_t(L,t), \xi_t(L,t), s_t(L,t))^{\top}$ the solution of the system \eqref{abs}. Then,  there exists a positive constant $C$ such that the following estimate holds
\begin{align}\label{seconddesiobserder2}
\Vert U_0 \Vert^2_{\mathcal{H}} \leq C \int^T_0 \left( \left|w_t(L, t)\right|^2dt +  \left|\xi_t(L, t)\right|^2 + \left|s_t(L, t)\right|^2 \right)dt.
\end{align}
\end{theorem}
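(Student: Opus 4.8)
The plan is to upgrade the trace estimate \eqref{primerdesiobserder} into the genuine observability inequality \eqref{seconddesiobserder2} by removing the lower-order term $\Vert (w,\xi,s)\Vert^2_{L^\infty(0,T;[H^s(0,L)]^3)}$ via a compactness--uniqueness argument. First, since $\{S(t)\}$ is a group of isometries, the conservation identity \eqref{identi_gisometr} gives $\Vert U(t)\Vert_{\mathcal H}=\Vert U_0\Vert_{\mathcal H}$ for every $t$; combining this with \eqref{primerdesiobserder} yields
\[
\Vert U_0\Vert^2_{\mathcal H}\le C\int_0^T\!\big(|w_t(L,t)|^2+|\xi_t(L,t)|^2+|s_t(L,t)|^2\big)\,dt+C\Vert (w,\xi,s)\Vert^2_{L^\infty(0,T;[H^s(0,L)]^3)}.
\]
It therefore remains to show that the last, compact term is dominated up to a constant by the boundary observation alone.

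I would argue by contradiction. Suppose \eqref{seconddesiobserder2} fails; then there is a sequence of initial data $U_0^n\in D(\mathcal A)$ with associated solutions $U^n$ such that $\Vert U_0^n\Vert_{\mathcal H}=1$ while $\int_0^T(|w^n_t(L,t)|^2+|\xi^n_t(L,t)|^2+|s^n_t(L,t)|^2)\,dt\to 0$. Inserting this into the displayed inequality forces $\Vert (w^n,\xi^n,s^n)\Vert_{L^\infty(0,T;[H^s(0,L)]^3)}$ to stay bounded away from zero for large $n$. On the other hand, the isometry bounds $(U^n)$ in $C([0,T];\mathcal H)$, so in particular $(w^n,\xi^n,s^n)$ is bounded in $C([0,T];[H^1_*(0,L)]^3)$ and, from $U^n_t=\mathcal A U^n$, their time derivatives are bounded in $C([0,T];[L^2(0,L)]^3)$. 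Since $s<1$, the embedding $H^1_*(0,L)\hookrightarrow H^s(0,L)$ is compact, so an Aubin--Lions/Arzel\`a--Ascoli argument lets me extract a subsequence converging strongly in $C([0,T];[H^s(0,L)]^3)$ to a limit $(w,\xi,s)$, which is therefore nonzero by the lower bound just obtained.

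Passing to the limit in the equations, $(w,\xi,s)$ is a solution of the homogeneous system \eqref{2bbm-hom} whose boundary observation vanishes, i.e. $w_t(L,\cdot)=\xi_t(L,\cdot)=s_t(L,\cdot)\equiv 0$ on $(0,T)$. Differentiating these relations in time and inserting them into the Venttsel conditions yields the overdetermined boundary data $w_x(L,\cdot)+\xi(L,\cdot)+s(L,\cdot)=0$, $\xi_x(L,\cdot)=0$ and $s_x(L,\cdot)=0$, together with the homogeneous Dirichlet conditions at $x=0$. The crux---and the step I expect to be the main obstacle---is the uniqueness/unique continuation claim that this overdetermined problem admits only the trivial solution. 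I would handle it spectrally: because $\mathcal A$ has compact resolvent (Lemma \ref{0inresolv}) and is skew-adjoint (Theorem \ref{teogeneinfiA}), the limit can be expanded in eigenfunctions of $\mathcal A$, reducing the question to showing that any eigenfunction with vanishing boundary trace $\Psi_4=\Psi_5=\Psi_6=0$ must vanish identically. This is an ODE boundary-value computation for the coupled triple $(\phi^1,\phi^2,\phi^3)$, in which the coupling through $w_x+\xi+s$ and the structural constant $\gamma$ must be tracked carefully; establishing that the overdetermination kills every eigenmode, and hence that $(w,\xi,s)\equiv 0$, contradicts the nontriviality of the limit and completes the proof.
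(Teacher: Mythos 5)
Your argument follows the paper's proof almost verbatim up to the unique continuation step: the paper also argues by contradiction with a normalized sequence, uses the isometry identity \eqref{identi_gisometr}, the compact embedding $H_*^1(0,L)\hookrightarrow H^s(0,L)$ for $s\in(\frac{1}{2},1)$ together with Simon's compactness theorem to obtain strong convergence, invokes the intermediate estimate \eqref{primerdesiobserder} to conclude the limit is nontrivial (in fact the paper shows $\{U^n\}$ is Cauchy in $L^{\infty}(0,T;\mathcal{H})$, so that $\Vert U(t)\Vert_{\mathcal{H}}=1$ for all $t$), and passes to the limit to obtain a solution of \eqref{2bbm-hom} with $w_t(L,\cdot)=\xi_t(L,\cdot)=s_t(L,\cdot)\equiv 0$, hence the static overdetermined boundary conditions of \eqref{2bbm_nonhomo_Ulimi}. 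Up to that point your proposal is sound and matches the paper.

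Where you diverge, there is a genuine gap. The entire weight of the theorem rests on the uniqueness claim for the overdetermined limit system, and you do not prove it: you announce a spectral strategy and explicitly label the decisive ODE computation as ``the main obstacle.'' Moreover, the reduction you sketch is not valid as stated: writing the limit as $U(t)=\sum_n c_n e^{i\lambda_n t}\Phi_n$ and knowing that the boundary traces vanish on the finite interval $(0,T)$ does not let you conclude that each mode separately has vanishing trace, because the exponentials $e^{i\lambda_n t}$ are not independent on a bounded interval without quantitative spectral-gap information of Ingham type, which you do not have. The rigorous version of your idea requires an extra argument: the subspace $N_T$ of data whose observation vanishes is finite dimensional (by the compactness you established) and invariant under $\mathcal{A}$, so that, if nonzero, it contains an eigenvector of $\mathcal{A}$; only then is the problem reduced to eigenfunctions, and the ODE boundary-value analysis must then actually be carried out, tracking the coupling through $w_x+\xi+s$ and the term $\gamma s$. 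The paper avoids all of this: it extends the limit by zero outside $(0,L)$, applies Holmgren's uniqueness theorem to conclude $(\hat{w},\hat{\xi},\hat{s})\equiv(0,0,0)$ (see \eqref{bbm_nonhomo_extensionessolu}), and then eliminates the residual stationary system \eqref{2bbm_nonhomo_Ulimi_deridoscero} using the uniqueness already contained in Lemma \ref{0inresolv}. Until you either supply the finite-dimensional invariance-plus-eigenvector argument together with the explicit eigenfunction computation, or replace them by a Holmgren/Carleman-type unique continuation result as the paper does, your proof is incomplete at its most critical point.
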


\begin{proof}
Let us argue by contradiction, following the so-called “compactness-uniqueness”
argument\footnote{For details of this argument see, for instance, \cite{Lions1988SIAM,ZUA}.}. Suppose that \eqref{seconddesiobserder2} does not hold. Thus,  there exists a sequence $\{U^n_0\}_{n\in\mathbb{N}} \in \mathcal{H}$, such that
\begin{equation}\label{rrrr}
1=\Vert U^n_0 \Vert^2_{\mathcal{H}} >n \int^T_0 \left( \left|w^n_t(L, t)\right|^2dt +  \left|\xi^n_t(L, t)\right|^2 + \left|s^n_t(L, t)\right|^2 \right)dt.
\end{equation}
Note que \eqref{rrrr} is equivalent to the next two assertions: 
\begin{align}\label{noriguaconver_1}
\Vert U^n(t) \Vert^2_{\mathcal{H}}=\Vert U^n_0 \Vert^2_{\mathcal{H}} = 1, \ \ \text{ for all } \ t \in [0,T],
\end{align}
and
\begin{align}\label{dericonver_2}
\int^T_0 \left( \left|w^n_t(L, t)\right|^2dt +  \left|\xi^n_t(L, t)\right|^2 + \left|s^n_t(L, t)\right|^2 \right)dt \longrightarrow 0, \ \text{ as} \ n \rightarrow \infty,
\end{align}
where the function $U^n=(w^n,w^n_t, \xi^n,\xi^n_t, s^n,s^n_t, w^n_t(L,t), \xi^n_t(L,t), s^n_t(L,t))^{\top}$ solves
\begin{equation}\label{abswithn}
\left\{
\begin{array}
[c]{l}
U^n_t(t) = \mathcal{A} U^n(t),\\
U^n(0) = U^n_0.
\end{array}\right.
\end{equation}

Observe that, from \eqref{noriguaconver_1} and the definition of the norm, we get
\begin{equation}\label{paracompac_1}
\begin{cases}
(w^n, \xi^n, s^n)  & \text { bounded in }  L^{\infty}\left(0, T ;  [H^{1}(0, L)]^3\right) \\
(w^n_t, \xi^n_t, s^n_t) & \text { bounded in }  L^{\infty}\left(0, T; [L^{2}(0, L)]^3\right).
\end{cases}
\end{equation}
Since $H^{1}(0, L)\hookrightarrow H^s(0,L) \hookrightarrow L^{2}(0, L)$ and  the embedding $H^{1}(0, L)\hookrightarrow H^s(0,L)$ is compact, for all $s \in (\frac{1}{2}, 1)$, from \eqref{paracompac_1} and classical compactness results  \cite[Corollary 4]{SIMO}, we can extract a subsequence of $\{(w^n, \xi^n, s^n)\}_{n\in\mathbb{N}}$, still denoted by $\{(w^n, \xi^n, s^n)\}_{n\in\mathbb{N}}$, such that
\begin{align}\label{firsconverimp}
(w^n, \xi^n, s^n) \longrightarrow (w, \xi, s) \ \ \text{ strongly in } \ L^{\infty}\left(0, T; [H^{s}(0, L)]^3\right). 
\end{align}Thus, from \eqref{primerdesiobserder}, \eqref{dericonver_2} and \eqref{firsconverimp}, we deduce that $\{U^n\}_{n\in\mathbb{N}}$ is a Cauchy sequence in $L^{\infty}\left(0, T; \mathcal{H}\right)$.  Therefore, 
\begin{align*}
U^n \longrightarrow U \ \ \text{ strongly in } \ L^{\infty}\left(0, T; \mathcal{H}\right),
\end{align*}
and by equality \eqref{noriguaconver_1}, yields that
\begin{align}\label{nor1soluU}
\Vert U(t) \Vert^2_{\mathcal{H}} = 1, \ \ \text{ for all } \ t \in [0,T].
\end{align}
Also,
\begin{equation}\label{cerofronteraU}
\begin{split}
0& = \liminf_{n \rightarrow 0} \left\lbrace \int^T_0 \left( \left|w^n_t(L, t)\right|^2dt +  \left|\xi^n_t(L, t)\right|^2 + \left|s^n_t(L, t)\right|^2 \right)dt\right\rbrace \\
& \geq \int^T_0 \left( \left|w_t(L, t)\right|^2dt +  \left|\xi_t(L, t)\right|^2 + \left|s_t(L, t)\right|^2 \right)dt. 
\end{split}
\end{equation}
From \eqref{cerofronteraU} and the above convergences, passing the limit in \eqref{abswithn}, we have that the limit $U=(w,w_t, \xi,\xi_t, s,s_t, w_t(L,t), \xi_t(L,t), s_t(L,t))^{\top}$ satisfies the linear system
\begin{equation}\label{2bbm_nonhomo_Ulimi}
\left\{\begin{array}{ll} 
\rho_{1}w_{tt}-k\left(  w_{x}+\xi+s\right)_{x}   =0, &x\in(0,L),\,\,\, 
t \in (0, T),\\   
\rho_{2}\xi_{tt}-b\xi_{xx}+k\left(  w_{x}+\xi+s\right)   
=0,  &x\in(0,L),\,\,\, 
t\in (0, T),\\
\rho_{2}s_{tt}-bs_{xx}+3k\left(  w_{x}+\xi+s\right)  
+ \gamma s=0, & x\in(0,L),\,\,\, t\in (0, T),\\
w(0, t)=\xi(0, t)= s(0, t) =0, & t\in (0, T),\\
w_x(L,t)+\xi(L,t)+s(L, t)=0, &
t\in (0, T),\\
\xi_{x}(L, t)= 0, &
t\in (0, T),\\
s_x(L, t) =0, &
t\in (0,T).\end{array}\right.
\end{equation} 
Let $( \hat{w}, \hat{\xi}, \hat{s})$ the extension by zero of $(w, \xi, s)$, for $ x \in (-a,a)\setminus(0,L)$, where $ (0,L)\subset(-a,a)$ it is an interval. Then, $( \hat{w}, \hat{\xi}, \hat{s})$ solves, in $D^{'}((0, L)\times (0, T))$, the following system 
 
\begin{equation}\label{bbm_nonhomo_extensionessolu}
\left\{\begin{array}{ll} 
\rho_{1}\hat{w}_{tt}-k(  \hat{w}_{x}+\hat{\xi}+\hat{s})_{x}   =0, &x\in(0,L),\,\,\, 
t \in (0, T),\\   
\rho_{2}\hat{\xi}_{tt}-b\hat{\xi}_{xx}+k(  \hat{w}_{x}+\hat{\xi}+\hat{s})   
=0,  &x\in(0,L),\,\,\, 
t\in (0, T),\\
\rho_{2}\hat{s}_{tt}-b\hat{s}_{xx}+3k(  \hat{w}_{x}+\hat{\xi}+\hat{s})  
+ \gamma \hat{s}=0, & x\in(0,L),\,\,\, t\in (0, T),\\
\hat{w}(0, t)=\hat{\xi}(0, t)= \hat{s}(0, t) =0, & t\in (0, T),\\
\hat{w}_x(L,t)+\hat{\xi}(L,t)+\hat{s}(L, t)=0, &
t\in (0, T),\\
\hat{\xi}_{x}(L, t)= 0, &
t\in (0, T),\\
\hat{s}_x(L, t) =0, &
t\in (0,T) \\
( \hat{w}, \hat{\xi}, \hat{s}) =(0, 0, 0), & x \in (-a, 0] \cup [L, a), \ t\in (0, T),
\end{array}\right.
\end{equation} 
Then, by Holmgren’s uniqueness theorem, $$( \hat{w}, \hat{\xi}, \hat{s}) \equiv (0, 0, 0)  \text{ in } (-a, a)\times(0,T).$$ Hence, from \eqref{2bbm_nonhomo_Ulimi} and \eqref{bbm_nonhomo_extensionessolu}, we have that  $U=(w,w_t, \xi,\xi_t, s,s_t, w_t(L,t), \xi_t(L,t), s_t(L,t))^{\top}$ satisfies 
\begin{equation}\label{2bbm_nonhomo_Ulimi_deridoscero}
\left\{\begin{array}{ll} 
k\left(  w_{x}+\xi+s\right)_{x}   =0, &x\in(0,L),\,\,\, 
t \in (0, T),\\   
b\xi_{xx}-k\left(  w_{x}+\xi+s\right)   
=0,  &x\in(0,L),\,\,\, 
t\in (0, T),\\
bs_{xx}-3k\left(  w_{x}+\xi+s\right)  
-\gamma s=0, & x\in(0,L),\,\,\, t\in (0, T),\\
w(0, t)=\xi(0, t)= s(0, t) =0, & t\in (0, T),\\
w_x(L,t)+\xi(L,t)+s(L, t)=0, &
t\in (0, T),\\
\xi_{x}(L, t)= 0, &
t\in (0, T),\\
s_x(L, t) =0, &
t\in (0,T).\end{array}\right.
\end{equation}
Thanks to the uniqueness of the solution to the previous system established by Lemma \ref{0inresolv}, the only solution to \eqref{2bbm_nonhomo_Ulimi_deridoscero} is U = 0. However, this contradicts \eqref{nor1soluU}. Therefore, \eqref{seconddesiobserder2} must hold, which completes the proof of the proposition.
\end{proof}


We are now in a position to prove the observability inequality.

\begin{theorem}
For any $U_0 \in\mathcal{H}$, let $U=(w,w_t, \xi,\xi_t, s,s_t, w_t(L,t), \xi_t(L,t), s_t(L,t))^{\top}$ the solution of the system \eqref{abs}. Then,  there exists a positive constant $C$ such that the following estimate holds
\begin{align}\label{thirdesiobserder3}
\Vert U_0 \Vert^2_{\mathcal{H}} \leq C \int^T_0 \left( \left|w(L, t)\right|^2dt +  \left|\xi(L, t)\right|^2 + \left|s(L, t)\right|^2 \right)dt.
\end{align}
\end{theorem}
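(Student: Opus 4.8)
The plan is to deduce the position--trace observability \eqref{thirdesiobserder3} from the velocity--trace observability \eqref{seconddesiobserder2} already established in Theorem \ref{teoimportantepaperI}. Since \eqref{seconddesiobserder2} controls $\|U_0\|^2_{\mathcal{H}}$ by $\int_0^T(|w_t(L,t)|^2+|\xi_t(L,t)|^2+|s_t(L,t)|^2)\,dt$, it suffices to dominate this boundary velocity integral by the boundary position integral $\int_0^T(|w(L,t)|^2+|\xi(L,t)|^2+|s(L,t)|^2)\,dt$, up to a term that can be absorbed. The bridge between the two is provided by the dynamic Venttsel conditions themselves, read as the scalar second--order relations $w_{tt}(L,t)=-(w_x(L,t)+\xi(L,t)+s(L,t))$, $\xi_{tt}(L,t)=-\xi_x(L,t)$ and $s_{tt}(L,t)=-s_x(L,t)$ holding at the boundary point $x=L$.

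First I would fix a temporal cut--off $\theta\in C_c^\infty(0,T)$ with $0\le\theta\le1$ and $\theta\equiv1$ on $[\delta,T-\delta]$, multiply each of the three boundary identities above by $\theta(t)$ times the corresponding trace $w(L,t)$, $\xi(L,t)$, $s(L,t)$, and integrate by parts in time on $(0,T)$. Because $\theta$ vanishes at $t=0,T$, no temporal endpoint terms survive, and one obtains, for the $w$--component,
\begin{equation*}
\int_0^T\theta\,|w_t(L,t)|^2\,dt = -\int_0^T\theta'\,w(L,t)\,w_t(L,t)\,dt + \int_0^T\theta\,\big(w_x(L,t)+\xi(L,t)+s(L,t)\big)\,w(L,t)\,dt,
\end{equation*}
with the analogous (decoupled, hence simpler) identities for $\xi$ and $s$. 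Applying Young's inequality, the $\theta'$ term yields $\varepsilon\int_0^T|w_t(L,t)|^2\,dt+C_\varepsilon\int_0^T|w(L,t)|^2\,dt$, the coupling contributions $\xi(L,t),s(L,t)$ are already of the desired position--trace type, and since $|w_t(L,t)|\le C\|U(t)\|_{\mathcal H}=C\|U_0\|_{\mathcal H}$ by \eqref{identi_gisometr}, the portion of the velocity integral lost on $[0,\delta]\cup[T-\delta,T]$ is bounded by $C\delta\|U_0\|^2_{\mathcal H}$. Choosing $\varepsilon$ and $\delta$ small enough lets me absorb the velocity traces back to the left.

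The one genuinely nontrivial contribution is $\int_0^T\theta\,w_x(L,t)\,w(L,t)\,dt$, which involves the spatial boundary derivatives $w_x(L,t)$, $\xi_x(L,t)$, $s_x(L,t)$. Here I would invoke a hidden--regularity (direct) estimate $\int_0^T\big(|w_x(L,t)|^2+|\xi_x(L,t)|^2+|s_x(L,t)|^2\big)\,dt\le C\|U_0\|^2_{\mathcal H}$, obtained by the same $q(x)\partial_x$ multiplier computation already used to produce \eqref{total_suma_some} (take $q$ smooth with $q(0)=0$, $q(L)=1$). Estimating $\int_0^T\theta\,w_x(L)w(L)\,dt\le \varepsilon\int_0^T|w_x(L)|^2\,dt+C_\varepsilon\int_0^T|w(L)|^2\,dt\le \varepsilon C\|U_0\|^2_{\mathcal H}+C_\varepsilon\int_0^T|w(L)|^2\,dt$, the hidden--regularity term appears with the small factor $\varepsilon$. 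This is the main obstacle of the argument: the derivative traces are not directly controlled by the position traces and must be handled by the multiplier identity combined with the smallness of $\varepsilon$.

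Collecting the three components, I arrive at an inequality of the form
\begin{equation*}
\int_0^T\big(|w_t(L,t)|^2+|\xi_t(L,t)|^2+|s_t(L,t)|^2\big)\,dt \le \varepsilon\,\|U_0\|^2_{\mathcal H} + C_\varepsilon\int_0^T\big(|w(L,t)|^2+|\xi(L,t)|^2+|s(L,t)|^2\big)\,dt.
\end{equation*}
Inserting this into \eqref{seconddesiobserder2} and choosing $\varepsilon$ so small that the resulting $\|U_0\|^2_{\mathcal H}$ term on the right carries a coefficient below $1$, I absorb it into the left--hand side and obtain \eqref{thirdesiobserder3}. Should the hidden--regularity step leave a residual lower--order contribution $C\|(w,\xi,s)\|^2_{L^\infty(0,T;[H^s(0,L)]^3)}$, it would be removed by exactly the compactness--uniqueness argument of Theorem \ref{teoimportantepaperI} (strong $H^s$ compactness, passage to the limit, and Holmgren's theorem forcing the limit to vanish), so the conclusion is unaffected.
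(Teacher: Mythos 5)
Your proposal is correct in outline (granting, as the paper itself does, the velocity--trace observability of Theorem \ref{teoimportantepaperI}), but it takes a genuinely different route from the paper's proof. The paper's argument is a short operator-theoretic shift: it introduces the auxiliary trajectory $\tilde U$ solving \eqref{abssombrero} with datum $\tilde U_0=\mathcal{A}^{-1}U_0$ (available by Lemma \ref{0inresolv}), notes that by uniqueness $\tilde U_t=U$, so the \emph{velocity} traces of $\tilde U$ coincide with the \emph{position} traces of $U$, applies \eqref{seconddesiobserder2} to $\tilde U$, and then passes from $\Vert \tilde U_0\Vert_{\mathcal H}$ back to $\Vert U_0\Vert_{\mathcal H}$. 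You instead work at the level of the boundary dynamics: the Venttsel conditions convert velocity traces into position traces through a time-cutoff integration by parts, the resulting normal-derivative traces $w_x(L,t)$, $\xi_x(L,t)$, $s_x(L,t)$ are controlled by a hidden-regularity estimate, and the remainder is absorbed using \eqref{seconddesiobserder2}. Your hidden-regularity claim is indeed provable exactly as you indicate: it follows from the identities \eqref{ecua1_1_fordesiguaob1}--\eqref{ecua3_fordesiguaob1} behind \eqref{total_suma_some}, together with the conservation law \eqref{identi_gisometr} and the fact that the boundary velocities and the traces of $\xi,s$ are pointwise dominated by $\Vert U(t)\Vert_{\mathcal H}$; the only point needing care is the $w$-component, where one first controls $\int_0^T\bigl(w_x(L,t)+\xi(L,t)+s(L,t)\bigr)^2dt$ via the combination \eqref{I_0pri} (absorbing the cross term with $\xi(L)+s(L)$) and only then isolates $w_x(L)$. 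Your parameter ordering is also sound: the constants of \eqref{seconddesiobserder2} and of the hidden-regularity bound are fixed before $\varepsilon$ and $\delta$ are chosen, so the final absorption is not circular.

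As for what each approach buys: the paper's proof is two lines, but its last step, $\Vert U_0\Vert^2_{\mathcal H}\leq C\Vert \tilde U_0\Vert^2_{\mathcal H}$ with $\tilde U_0=\mathcal{A}^{-1}U_0$ (justified there ``by Riesz representation theorem''), amounts to a boundedness property of the unbounded operator $\mathcal{A}$ and, taken literally, the shift argument only yields observability of the weaker quantity $\Vert\mathcal{A}^{-1}U_0\Vert_{\mathcal H}$. Your argument is longer and requires writing out the hidden-regularity lemma and a density step (derive the identities for $U_0\in D(\mathcal{A})$, then extend to $\mathcal{H}$), but every step is quantitative at the energy level and it avoids that norm comparison entirely; both proofs, of course, rest equally on Theorem \ref{teoimportantepaperI}.
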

\begin{proof}
In order to proof the inequality \eqref{thirdesiobserder3}, we consider
 the problem
 \begin{equation}\label{abssombrero}
\left\{
\begin{array}
[c]{l}
\tilde{U}_t = \mathcal{A} \tilde{U}\\
\tilde{U}(0) = \tilde{U}_0,
\end{array}\right.
\end{equation}where $\tilde{U}_0=\mathcal{A}^{-1}U_0$. It is important to point out that the existence of $\mathcal{A}^{-1}$ is guaranteed by Lemma \ref{0inresolv}. Thus, from Theorem \ref{teoimportantepaperI}, the solution  $$\tilde{U}=(\tilde{w},\tilde{w}_t, \tilde{\xi}, \tilde{\xi}_t, \tilde{s}, \tilde{s}_t, \tilde{w}_t(L,t), \tilde{\xi}_t(L,t), \tilde{s}_t(L,t))^{\top}$$ of the system \eqref{abssombrero} satisfies
\begin{align}\label{tresdesiobserderprima}
\Vert \tilde{U}_0 \Vert^2_{\mathcal{H}} \leq C \int^T_0 \left( \left|\tilde{w}_t(L, t)\right|^2dt +  \left|\tilde{\xi}_t(L, t)\right|^2 + \left|\tilde{s}_t(L, t)\right|^2 \right)dt,
\end{align}for some constant $C>0$.

On the other hand, it is straightforward to see that $\tilde{U}_t = U$. Then, we have that
\begin{align}\label{equalderiandwxis}
(\tilde{w}_t, \tilde{\xi}_t, \tilde{s}_t)=(w, \xi, s).
\end{align}Finally, from \eqref {tresdesiobserderprima}, \eqref{equalderiandwxis} and  by Riesz representation theorem, we deduce that
\begin{align*}
 \Vert U_0 \Vert^2_{\mathcal{H}}\leq& C\Vert \tilde{U}_0 \Vert^2_{\mathcal{H}} \\
 \leq & C \int^T_0 \left( \left|\tilde{w}_t(L, t)\right|^2dt +  \left|\tilde{\xi}_t(L, t)\right|^2 + \left|\tilde{s}_t(L, t)\right|^2 \right)dt \\
=& C \int^T_0 \left( \left|w(L, t)\right|^2dt +  \left|\xi(L, t)\right|^2 + \left|s(L, t)\right|^2 \right)dt,
\end{align*}
and the proof is achieved. 
\end{proof}

Due to the previous proposition, the following consequence holds.

\begin{corollary}\label{finaldesforadforcontrolteo}
Let $W=\left( \chi, \chi_t, \Theta, \Theta_t, \eta, \eta_t, \chi_t(L, t), \eta_t(L, t), \Theta_t(L, t) \right)^{\top} $ is solution of the adjoint system \eqref{back}.  Then, there exists a 
constant $C > 0$, such that the following observability inequality holds 
\begin{equation}\label{DESIIMPORTANTE_colorarioprobado}
 \Vert W(0) \Vert^2_{\mathcal{H}} \leq C \int^T_0 \left( \left|\chi(L, t)\right|^2dt +  \left|\eta(L, t)\right|^2 + \left|\Theta(L, t)\right|^2 \right)dt.  
\end{equation}
\end{corollary}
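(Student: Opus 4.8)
The plan is to deduce \eqref{DESIIMPORTANTE_colorarioprobado} directly from the observability inequality \eqref{thirdesiobserder3}, which has already been established for the forward system \eqref{abs}, by exploiting the time-reversibility of the dynamics. The starting point is the observation that the adjoint system \eqref{back} is governed by exactly the same operator $\mathcal{A}$ appearing in \eqref{abs}: its interior equations and its dynamical conditions at $x=L$ coincide with those of the homogeneous system \eqref{2bbm-hom}, only the data are now prescribed at $t=T$ instead of $t=0$. Since, by Theorem \ref{teogeneinfiA}, $\mathcal{A}$ generates a group of isometries $\{S(t)\}_{t\in\mathbb{R}}$, the solution of \eqref{back} can be written as $W(t)=S(t-T)W^T$, with $W^T$ the terminal datum, and is thus well defined for every $t\in\mathbb{R}$.

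First I would perform the time reversal $\tau=T-t$ and set $\widehat W(\tau):=W(T-\tau)$. Because only second-order time derivatives occur in \eqref{back}---no first-order time derivative is present in \eqref{2bbm-hom} or in the adjoint---the substitution $t\mapsto T-t$ leaves every equation and every boundary condition invariant. Consequently $\widehat W$ is a solution of a problem of the form \eqref{abs}, with initial datum $\widehat W(0)=W(T)=W^T\in\mathcal{H}$, and its first components are $(\widehat\chi,\widehat\eta,\widehat\Theta)(\cdot,\tau)=(\chi,\eta,\Theta)(\cdot,T-\tau)$.

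Next I would apply the estimate \eqref{thirdesiobserder3} to $\widehat W$, matching the triple $(\widehat\chi,\widehat\eta,\widehat\Theta)$ with $(w,\xi,s)$. This gives
\[
\|\widehat W(0)\|^2_{\mathcal{H}}\le C\int_0^T\left(|\chi(L,T-\tau)|^2+|\eta(L,T-\tau)|^2+|\Theta(L,T-\tau)|^2\right)d\tau.
\]
To conclude, I would change variables $t=T-\tau$ in the boundary integral, which leaves it unchanged, and invoke the isometry identity \eqref{identi_gisometr} to write $\|\widehat W(0)\|_{\mathcal{H}}=\|W^T\|_{\mathcal{H}}=\|W(0)\|_{\mathcal{H}}$; together these two facts turn the displayed inequality into exactly \eqref{DESIIMPORTANTE_colorarioprobado}.

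I do not expect any genuinely new analytical difficulty in this step: all the substance has already been absorbed into the two intermediate trace estimates \eqref{primerdesiobserder} and \eqref{seconddesiobserder2} and into the compactness--uniqueness argument behind \eqref{thirdesiobserder3}. The only point deserving care is purely a matter of bookkeeping---verifying that the dynamical boundary conditions at $x=L$ in \eqref{back} are indeed invariant under the time reversal, so that $\widehat W$ really falls within the functional framework for which \eqref{thirdesiobserder3} was proved, and that the terminal datum $W^T\in\mathcal{H}$ places $W$ in that same class.
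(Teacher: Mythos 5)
Your proposal is correct and takes essentially the same route as the paper: the paper's proof of Corollary \ref{finaldesforadforcontrolteo} consists precisely in applying the observability estimate \eqref{thirdesiobserder3} to the adjoint system \eqref{back}, which is legitimate because \eqref{back} is the same dynamics generated by $\mathcal{A}$ run backward under the isometry group. Your time-reversal substitution and the appeal to \eqref{identi_gisometr} simply make explicit the bookkeeping that the paper leaves implicit.
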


\subsection{Proof of Theorem \ref{controlteorfinalnodemos}}
As a direct consequence of the observability inequality established in Corollary \ref{finaldesforadforcontrolteo}, combined with the HUM method from control theory, we obtain the controllability of the laminated system with Ventcel boundary conditions. In fact, let us consider the homogeneous system \eqref{abs} with initial data $U_0\in H'$, and let $U$ be its corresponding solution. To evaluate this initial state in terms of boundary observables, we introduce the functional
 $\mathcal{J}: \mathcal{H}\to \mathbb{R}$, given by 
$$\mathcal{J}(W_0)=\frac{1}{2} \int^T_0 \left( \left|\chi(L, t)\right|^2dt +  \left|\eta(L, t)\right|^2 + \left|\Theta(L, t)\right|^2 \right)dt+\left\langle W_0,U_0\right\rangle_{\mathcal{H}\times\mathcal{H}'},$$
where $W=\left( \chi, \chi_t, \eta, \eta_t, \Theta, \Theta_t,  \chi_t(L, t), \eta_t(L, t), \Theta_t(L, t) \right)^{\top} $ is solution of the adjoint system. 

Note that $\mathcal{J}$ is continuous and convex. Moreover, the coercivity of functional $\mathcal{J}$ follows immediately from the observability inequality \eqref{finaldesforadforcontrolteo}. Thus, thanks to \cite[Corollary 3.23]{Brezis}, we have that $\mathcal{J}$ has a minimizer. Since $\mathcal{J}$ is strictly convex, it follows that the minimizer is unique. From these facts, defining $u_1=\chi(L, t)$, $u_2=\eta(L, t)$ and $u_3=\Theta(L, t)$ the null controllability holds true.\qed

\section{Final comments}\label{sec:4}
In this work, we address the boundary controllability of a laminated beam governed by a system of three coupled equations, subject to dynamic Venttsel-type boundary conditions. To establish the required observability inequality, we employ the multiplier technique (cf. \cite{Komornik}) along with the Hilbert Uniqueness Method (cf. \cite{Lions1988SIAM}). The main challenge stems from the need to control a system of three beam equations using three boundary inputs. As this is the first study tackling this specific problem, our results pave the way for further investigations in both the control and stabilization of such systems. Let us point out some of them.
\begin{itemize}
\item \textit{Less controls}: A natural question arising from our analysis is whether it is possible to reduce the number of boundary controls. We believe that our current approach is optimal in the sense that it achieves controllability of the three equations using three boundary inputs. Nonetheless, we conjecture that, by applying Carleman estimates to the linear operator defined in \eqref{opae2}, it may be feasible to eliminate one of the boundary controls. However, this remains an open problem at present.
\item \textit{Rapid stabilization}: It is well known in control theory that 
\begin{equation*}
\text{observability}\Longleftrightarrow\text{controllability}
\quad \text{and} \quad \text{observability}\Longrightarrow\text{rapid stabilizability}
\end{equation*}
We believe that an application of an Ingham-type theorem (cf. \cite{Ingham-1936}) to the system \eqref{2bbm} could lead to a sharper observability inequality, which in turn would support the derivation of faster stabilization results. This remains another important issue to be explored.
\end{itemize}

\vspace{0.2cm}

\subsection*{Data Availability} It does not apply to this article as no new data were created or analyzed in this study.
\subsection*{Conflict of interest} This work does not have any conflicts of interest.


\begin{thebibliography}{99} 

\bibitem{Victor} Almeida, A. F., Cavalcanti, M. M., Gonzalez Martinez, V. H., Zanchetta, J. P., \textit{Uniform stabilization for the semilinear wave equation in an inhomogeneous medium with locally distributed nonlinear damping and dynamic Cauchy--Ventcel type boundary conditions}, Commun. Contemp. Math., 2021, 1950072, 23 pp.

\bibitem{Bau} Bautista, G. J., Cabanillas, V. R., Potenciano-Machado, L., Quispe Méndez, T., \textit{Decay rates of strongly damped infinite laminated beams}, J. Math. Anal. Appl., 536(2), 2024.

\bibitem{BAPA} Bautista, G. J., Pazoto, A. F., \textit{On the controllability of a Boussinesq system for two-way propagation of dispersive waves}, J. Evol. Equ., 20 (2020), 607--630.

\bibitem{Brezis} Brezis, H., \textit{Functional Analysis, Sobolev Spaces and Partial Differential Equations}, Springer New York, (2010), XIV, 600.

\bibitem{Buffe} Buffe, R., \textit{Stabilization of the wave equation with Ventcel boundary condition}, J. Math. Pures Appl., 108(2), (2017), 207--259.

\bibitem{CQS} Cabanillas, V. R., Quispe, T., Sánchez, J., \textit{Optimal polynomial stability for laminated beams with Kelvin–Voigt damping}, Math. Methods Appl. Sci., 45(16) (2022), 9578--9601.

\bibitem{CF2019} Capistrano-Filho, R. A., Gallego, F. A., Pazoto, A. F., \textit{On the well-posedness and large-time behavior of higher order Boussinesq system}, Nonlinearity, 5 (2019), 1852--1881.

\bibitem{Cavalcanti} Cavalcanti, M. M., Khemmoudj, A., Medjden, M., \textit{Uniform stabilization of the damped Cauchy–Ventcel problem with variable coefficients and dynamic boundary conditions}, J. Math. Anal. Appl., 328(2) (2007), 900--930.

\bibitem{Cavalcanti3} Cavalcanti, M. M., Lasiecka, I., Toundykov, D., \textit{Wave equation with damping affecting only a subset of static Wentzell boundary is uniformly stable}, Trans. Amer. Math. Soc., 364 (2012), 5693--5713.

\bibitem{CHITOUR} Chitour, Y., Akil, M., Ghader, M., Wehbe, A., \textit{Stability and exact controllability of a Timoshenko system with only one fractional damping on the boundary}, Asymptot. Anal., 119(3–4) (2019), 221–280.

\bibitem{DoRu} Dolecki, S., Russell, D. L., \textit{A general theory of observation and control}, SIAM J. Control Optim., 15 (1977), 185–220.

\bibitem{DPV} Di Nezza, L., Palatucci, G., Valdinoci, E., \textit{Hitchhiker's guide to the fractional Sobolev spaces}, Bull. Sci. Math., 136(5) (2012), 521–573.

\bibitem{Farkas} Farkas, J. Z., Hinow, P., \textit{Physiologically structured populations with diffusion and dynamic boundary conditions}, Math. Biosci. Eng., 8(2) (2011), 503–513.

\bibitem{Feller} Feller, W., \textit{Generalized second order differential operators and their lateral conditions}, Illinois J. Math., 1(4) (1957), 459–504.

\bibitem{FSou} Feng, B., Soufyane, A., \textit{Memory-type boundary control of a laminated Timoshenko beam}, Math. Mech. Solids, 25 (2020), 1568–1588.

\bibitem{Goldstein} Goldstein, G. R., \textit{Derivation and physical interpretation of general boundary conditions}, Adv. Differential Equations, 11(4) (2006), 457–480.

\bibitem{Hemina} Hemina, A., \textit{Stabilisation frontière de problèmes de Ventcel}, ESAIM: COCV, 5 (2000), 591–622.

\bibitem{Hansen2} Hansen, S. W., Spies, R., \textit{Structural damping in laminated beams due to interfacial slip}, J. Sound Vib., 204 (1997), 183–202.

\bibitem{Ingham-1936} Ingham, A. E., \textit{Some trigonometrical inequalities with applications to the theory of series,} Math. Z. 41 (1936), 367--369.

\bibitem{Komornik} Komornik, V., \textit{Exact Controllability and Stabilization: The Multiplier Method}, Collection RMA, Vol. 36, Masson, Paris; Wiley, Chichester, 1994.

\bibitem{Langer} Langer, R., \textit{A problem in diffusion or in the flow of heat for a solid in contact with a fluid}, Tohoku Math. J., 35 (1932), 260–275.

\bibitem{LiMa} Lions, J. L., Magenes, E., \textit{Problèmes aux limites non homogènes et applications}, Vol. 1, Dunod, Paris, 1968.

\bibitem{Lions1988SIAM} Lions, J. L., \textit{Exact controllability, stabilization and perturbations for distributed systems}, SIAM Rev., 30(1) (1988), 1–68.

\bibitem{LoT1} Lo, A., Tatar, N.-E., \textit{Stabilization of laminated beams with interfacial slip}, Electron. J. Differential Equations, 129 (2015), 1–14.

\bibitem{Mahajan} Mahajan, Y., Roy, J., \textit{Handbook of Advanced Ceramics and Composites: Defense, Security, Aerospace and Energy Applications}, Springer International Publishing, 2019.

\bibitem{Maniar} Maniar, L., Chorfi, S.-E., El Guermai, G., Khoutaibi, A., \textit{Boundary null controllability for the heat equation with dynamic boundary conditions}, Evol. Equ. Control Theory, 12(2) (2023), 542–566.

\bibitem{Marta} Marta, A., \textit{The nonlinear wave equation with nonlinear Wentzell boundary conditions on time-dependent compact Riemannian manifolds}, 2024. Available at: \url{https://arxiv.org/abs/2404.05855}.

\bibitem{M} Micu, S., \textit{On the controllability of the linearized Benjamin-Bona-Mahony equation}, SIAM J. Control Optim., 39 (2001), 1677–1696.

\bibitem{Morgul-1} Morgül, O., \textit{Boundary control of a Timoshenko beam attached to a rigid body: planar motion}, Int. J. Control, 54(4) (1991), 763–791.

\bibitem{Morgul-2} Morgül, O., \textit{Dynamic Boundary Control of the Timoshenko Beam}, Automatica, 28(6) (1992), 1225–1260.

\bibitem{Nikita} Nikita, S. G., Sviridyuk, G. A., \textit{The heat conduction model involving two temperatures on the segment with Wentzell boundary conditions}, J. Phys. Conf. Ser., 1352(1) (2019), 012022.

\bibitem{Pazy} Pazy, A., \textit{Semigroups of Linear Operators and Applications to Partial Differential Equations}, Springer, New York, 1983.

\bibitem{Reddy} Reddy, J. N., \textit{Mechanics of laminated composite plates and shells: theory and analysis}, CRC Press, 2003.

\bibitem{SIMO} Simon, J., \textit{Compact sets in the space $L^p(0, T; B)$}, Ann. Mat. Pura Appl. (IV), 146 (1987), 65–96.

\bibitem{Timoshenko} Timoshenko, S., \textit{On the correction for shear of the differential equation for transverse vibrations of prismatic bars}, Philos. Mag., 41 (1921), 744–746.

\bibitem{Tom} Tom ter Elst, A. F. M., Meyries, M., Rehberg, J., \textit{Parabolic equations with dynamical boundary conditions and source terms on interfaces}, Ann. Mat. Pura Appl., 193(5) (2013), 1295–1318.

\bibitem{Venttsel-a} Venttsel, A. D., \textit{Semigroups of operators corresponding to a generalized differential operator of second order}, Dokl. Akad. Nauk SSSR, 111(2) (1956), 269–272.

\bibitem{Venttsel} Venttsel, A. D., \textit{On boundary conditions for multidimensional diffusion processes}, Theory Probab. Appl., 4 (1959), 164–177.

\bibitem{Wang} Wang, J. M., Xu, G. Q., Yung, S. P., \textit{Exponential Stabilization of Laminated Beams with Structural Damping and Boundary Feedback Controls}, SIAM J. Control Optim., 44(5) (2005), 1575–1597.

\bibitem{ZUA} Zuazua, E., \textit{Exponential decay for the semilinear wave equation with locally distributed damping}, Comm. Partial Differential Equations, 15(2) (1990), 205–235.
\end{thebibliography}
\end{document}